\documentclass[a4paper]{article}
\usepackage{lmodern}
\usepackage{stmaryrd,amsmath,amssymb,ifthen}
\usepackage{amsthm}
\usepackage[mathjax]{lwarp}

\setlength{\topmargin}{-0.5in}
\setlength{\textheight}{9.4in}
\setlength{\textwidth}{6.5in}
\setlength{\oddsidemargin}{+0.0in}
\setlength{\unitlength}{1mm}
\linespread{1.3}

\newtheorem{prop}{Proposition}
\newtheorem{thm}[prop]{Theorem}
\newtheorem{cor}[prop]{Corollary}
\newtheorem{lem}[prop]{Lemma}

\theoremstyle{definition}

\newtheorem{defn}[prop]{Definition}
\newtheorem{example}[prop]{Example}

\newtheorem{conj}[prop]{Conjecture}

\theoremstyle{remark}

\newcommand{\mathmacro}[1]{#1\CustomizeMathJax{#1}}

\mathmacro{
\newcommand{\N}{\mathbb{N}}
\newcommand{\Q}{\mathbb{Q}}
\newcommand{\R}{\mathbb{R}}
\newcommand{\Z}{\mathbb{Z}}
\newcommand{\C}{\mathbb{C}}
\newcommand{\F}{\mathbb{F}}
\newcommand{\Gal}{\mathrm{Gal}}
\newcommand{\mbG}{\mathbf{G}}

\newcommand{\GO}{\mathrm{GO}}
\newcommand{\tA}{\mathrm{A}}
\newcommand{\tB}{\mathrm{B}}
\newcommand{\tC}{\mathrm{C}}
\newcommand{\tD}{\mathrm{D}}

\newcommand{\mP}{\mathbb{P}}
\newcommand{\mPb}{\bar{\mathbb{P}}}
\newcommand{\lcm}{\mathrm{lcm}}
\newcommand{\GL}{\operatorname{GL}}
\newcommand{\PGL}{\operatorname{PGL}}
\newcommand{\SL}{\operatorname{SL}}
\newcommand{\PSL}{\operatorname{PSL}}
\newcommand{\Sp}{\operatorname{Sp}}

\newcommand{\GU}{\operatorname{GU}}

\newcommand{\SU}{\operatorname{SU}}
\newcommand{\PSU}{\operatorname{PSU}}

\newcommand{\e}{\mathrm e}
\newcommand{\I}{\mathrm i}
\newcommand{\ep}{\varepsilon}
}

\title{An Ennola duality for subgroups of groups of Lie type}

\numberwithin{prop}{section}
\numberwithin{equation}{section}
%\numberwithin{table}{section}

\author{David A.\ Craven, University of Birmingham}
\date{28th October, 2021}

\begin{document}
\maketitle

\setcounter{tocdepth}{3}
\begin{abstract} We develop a theory of Ennola duality for subgroups of finite groups of Lie type, relating subgroups of twisted and untwisted groups of the same type. Roughly speaking, one finds that subgroups $H$ of $\mathrm{GU}_d(q)$ correspond to subgroups of $\mathrm{GL}_d(-q)$, where $-q$ is interpreted modulo $|H|$. Analogous results for types other than $\mathrm A$ are established, including for those exceptional types where the maximal subgroups are known, although the result for type $\mathrm D$ is still conjectural. Let $M$ denote the Gram matrix of a non-zero orthogonal form for a real, irreducible representation of a finite group, and consider $\alpha=\sqrt{\det(M)}$. If the representation has twice odd dimension, we conjecture that $\alpha$ lies in some cyclotomic field. This does not hold for representations of dimension a multiple of $4$, with a specific example of the Janko group $\mathrm J_1$ in dimension $56$ given. (This tallies with Ennola duality for representations, where type $\mathrm D_{2n}$ has no Ennola duality with ${}^2\mathrm D_{2n}$.)
\end{abstract}
\section{Introduction}

In \cite{ennola1963}, Ennola described a conjectural way to extract the character table of $\GU_n(q)$ from that of $\GL_n(q)$, broadly by replacing `$q$' by `$-q$' and making some other alterations. (This was proved in \cite{kawanaka1985}.) Ennola duality later became the principle that representation-theoretic information about twisted groups of Lie type can be inferred from the untwisted groups via the same method (although for type $\tD_n$ for $n$ even the `Ennola dual' is not ${}^2\tD_n$ but $\tD_n$ itself, so information cannot be passed between them). For example, the set of unipotent degrees of a group of Lie type satisfy this duality, with Ennola duality inducing a self-bijection if there is no corresponding twisted group.

The purpose of this note is to record a phenomenon that appears not to have been noticed before in the literature, that Ennola duality extends to subgroups. What we mean by this is that one can extract from the list of maximal subgroups of $\SL_n(q)$ (most of) the maximal subgroups of $\SU_n(q)$, again by replacing `$q$' by `$-q$'.

The precise statement of Ennola duality for type $\tA$ is given in Theorem \ref{thm:ennolatypeA} below, but this gives a flavour for the statement.

\begin{thm} Let $H$ be a finite group and let $\chi$ be an irreducible character for $H$ of degree $d$. Suppose that $q$ and $q'$ are prime powers such that $q'\equiv -q\bmod |H|$, and $q$ and $|H|$ are coprime. If $H$ embeds in $\GL_d(q)$ with (Brauer) character $\chi$ then $H$ embeds in $\GU_d(q')$ with (Brauer) character $\chi$.
\end{thm}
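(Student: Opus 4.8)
The plan is to reduce both the hypothesis and the conclusion to a single membership condition in a Galois stabiliser, and then read off the theorem from $q'\equiv-q\pmod{|H|}$. First I would clear away the trivialities. As $q$ (hence $q'$) is coprime to $|H|$, for any prime $\ell$ dividing $qq'$ we have $\ell\nmid|H|$, so the decomposition map is an isomorphism and $\chi$ is just an absolutely irreducible ordinary character (the parenthetical ``Brauer'' matters only in the more general Theorem~\ref{thm:ennolatypeA}); moreover any representation of $H$ in characteristic $\ell\nmid|H|$ affording $\chi$ has the same kernel as $\chi$, so ``embeds'' merely adds the condition $\ker\chi=1$, which is symmetric between the two sides and plays no further role.

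Set $m=|H|$, let $\Gamma=\mathrm{Gal}(\mathbb{Q}(\zeta_m)/\mathbb{Q})$, write $\sigma_a\in\Gamma$ for $\zeta_m\mapsto\zeta_m^a$, and let $S\le\Gamma$ be the stabiliser of $\chi$ under the action of $\Gamma$ on $\Irr(H)$. I would first prove that $H$ embeds in $\GL_d(q)$ affording $\chi$ if and only if $\ker\chi=1$ and $\sigma_q\in S$. The content here is that an absolutely simple $\overline{\mathbb{F}}_p$-module descends to $\mathbb{F}_q$ exactly when it is fixed by the $q$-power Frobenius --- there is \emph{no} Schur-index obstruction, because the Brauer group of a finite field vanishes (Wedderburn). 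Unwinding this for the reduction of $\chi$ --- using that, as $p\nmid m$, reduction at a prime of $\mathbb{Z}[\zeta_m]$ above $p$ is unramified and hence Galois-equivariant with Frobenius $\sigma_q$ --- turns ``realisable over $\mathbb{F}_q$'' into ``$\chi^{\sigma_q}=\chi$''.

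Next I would handle $\GU_d(q')\le\GL_d(\mathbb{F}_{(q')^2})$, viewed as the stabiliser of a non-degenerate Hermitian form relative to $\sigma_{q'}$. For an absolutely irreducible $\rho\colon H\to\GL_d(\mathbb{F}_{(q')^2})$, I claim $\rho$ stabilises such a form if and only if $\rho\cong(\rho^{\vee})^{\sigma_{q'}}$: the forward direction is a one-line matrix identity, and for the converse Schur's lemma fixes the intertwiner $B$ up to a scalar, forces $B^{*}=\lambda B$ with $\lambda\overline{\lambda}=1$, and then Hilbert~90 for the finite field $\mathbb{F}_{(q')^2}$ (surjectivity of $x\mapsto x/\overline{x}$ onto the norm-one subgroup) rescales $B$ to be genuinely Hermitian, automatically non-degenerate. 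In character terms $\rho^{\vee}$ affords $\chi^{\sigma_{-1}}$ and the Frobenius twist contributes $\sigma_{q'}$, so the criterion reads $\chi^{\sigma_{-q'}}=\chi$, i.e.\ $\sigma_{-q'}\in S$; note this already gives $\sigma_{(q')^2}=\sigma_{-q'}^2\in S$, so $\chi$ is realisable over $\mathbb{F}_{(q')^2}$ and (by Noether--Deuring) the relevant $\mathbb{F}_{(q')^2}$-representation is itself self-conjugate-dual. Hence $H$ embeds in $\GU_d(q')$ affording $\chi$ if and only if $\ker\chi=1$ and $\sigma_{-q'}\in S$.

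Finally, since $q'\equiv-q\pmod m$ we have $\sigma_{-q'}=\sigma_q$ in $\Gamma$, so the two characterisations are literally the same condition, which proves the theorem (indeed its ``iff'' strengthening). The only non-formal ingredients are the vanishing of $\mathrm{Br}(\mathbb{F}_q)$, which removes the Schur index from the field-of-realisation question, and the rescaling of the intertwiner to an honest Hermitian form; I expect the step from ``$\rho\cong(\rho^{\vee})^{\sigma_{q'}}$'' to an actual unitary structure on $\rho$ to be where one must be most careful, though it is routine once Hilbert~90 is in hand.
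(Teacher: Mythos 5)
Your proposal is correct, and it proves the sharper ``if and only if'' version, but it reaches the conclusion by a genuinely different route from the paper. The paper first disposes of real-valued $\chi$ (then $H$ preserves a bilinear form, so lies in both $\GL_d$ and $\GU_d$ over any admissible field), and for non-real $\chi$ it works inside the algebraic group $\GL_d(k)$: the composite $\sigma$ of the $q'$-power Frobenius with the graph (inverse--transpose) automorphism fixes $\chi$, hence stabilizes the unique $\GL_d(k)$-conjugacy class of subgroups with Brauer character $\chi$, and then Lang--Steinberg-type results (\cite[Theorem 21.11, Corollary 21.8]{malletesterman} together with \cite[Lemma 1.8.6]{bhrd}) conjugate $\bar H$ into the fixed-point subgroup of $\sigma$, which is $\GU_d(q')$; the converse direction is the observation that otherwise the graph automorphism would normalize $\bar H$ and force $\chi$ to be real. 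Your argument instead stays entirely at the level of modules and forms: realizability over $\F_q$ is governed by the Galois stabilizer of $\chi$ in $\mathrm{Gal}(\Q(\zeta_{|H|})/\Q)$ (no Schur-index obstruction since the Brauer group of a finite field vanishes), and lying in a unitary group is the classical conjugate-dual self-isomorphism criterion, with Schur's lemma plus Hilbert 90 rescaling the intertwiner to an honest nondegenerate Hermitian Gram matrix. Both proofs ultimately rest on the same condition $\chi^{\sigma_{-q'}}=\chi$, which your congruence $q'\equiv-q\bmod|H|$ identifies with the $\GL_d(q)$ condition $\chi^{\sigma_q}=\chi$; what your route buys is a more elementary, self-contained treatment that handles real and non-real $\chi$ uniformly and avoids algebraic-group machinery, whereas the paper's formulation via Steinberg endomorphisms and distinguishing residues is the template that transfers directly to the orthogonal and exceptional cases treated later in the paper, and it sidesteps the explicit form-normalization step by exploiting uniqueness of the invariant form.
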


Theorem \ref{thm:ennolatypeA} is precise about the integers that can be used instead of $|H|$ for the congruences, and can be used with tables such as those in \cite{bhrd} to immediately read off subgroups of $\GU_d(q)$. (If $\chi$ is real-valued and $H$ embeds in $\GL_d(q)$, then it embeds in $\GL_d(q')$, $\GU_d(q)$ and $\GU_d(q')$ with the same character, so the content is for $\chi$ not real-valued.)

A similar statement, Proposition \ref{prop:ennolatypeB}, holds for types $\tB$ and $\tC$, but here $H$ always embeds in $G(q')$ whenever it embeds in $G(q)$, with the same hypotheses. For now, type $\tD$ remains somewhat more difficult to work with, and a form of Ennola duality holds, at least for integral representations (Theorem \ref{thm:EnnolatypeD}). For type $\tD_{28}$ we give an example to show that there is no number $m$ for which the embedding of the Janko group $\mathrm J_1$ into $\GO_{56}^+(q)$ depends only on the congruence of $q$ modulo $m$. However, for $\tD_n$ with $n$ odd, we have found no similar examples, and if Conjecture \ref{conj:EnnolatypeD} is true there are no such example. For $\tD_n$ and $n$ odd, if Conjecture \ref{conj:EnnolatypeD} holds (and it does for all integral representations) we simply replace $q$ by $-q$ modulo some integer to switch between $\GO^+$ and $\GO^-$, as with other types.

Notice that this example of $\mathrm J_1$ in dimension $56$ also throws up another point. It is known that the $p$-modular reduction can interact badly with algebraic conjugacy when $p$ divides the order of a finite group. This example shows similar behaviour even when $p$ is prime to the order, and in particular means that the type of symmetric bilinear form over $\F_q$ cannot, in a strict sense, be deduced from the character table of a finite group.

There is also no complete proof for exceptional groups. All known subgroups of exceptional groups satisfy the same Ennola-like behaviour, but the maximal subgroups of $E_8(q)$ are not yet known, and for $E_7(q)$ the conjectured list in \cite{craven2021un} (see Table \ref{tab:e7} below) is not known to be complete. We describe the version of Ennola duality that exists for these groups in Section \ref{sec:exceptional}. Again, this boils down to replacing $q$ in the tables in Section \ref{sec:exceptional} with $-q$ to obtain the table of the twisted version if there is one, or the same table again if there is not.

\medskip

Although it does not appear that the ideas in this work have appeared in this generality before, the work on classifying low-dimensional subgroups of classical groups, particularly orthogonal groups, in \cite{bhrd,rogersphd,schroederphd}, is distinctly reminiscent of it (if independent, as the author only learned of their ideas after writing this). For example \cite[Lemma 4.9.39]{bhrd}, which was mentioned (possibly for the first time in the literature) in \cite{turull1993}, is given a more general treatment in Proposition \ref{prop:typeDinteger}. A posteriori, one might see this article as offering a general framework for the results in those papers.

\medskip

In the next section we collect some preliminary definitions and results, and discuss fields of values for characters and embedding into linear groups over finite fields. The section after considers classical groups, and Section \ref{sec:exceptional} deals with exceptional groups.

We have written this paper to minimize the use of class field theory, although for groups of type $\tD$ it seems difficult to prove anything meaningful without at least basic results from it. This means a few preliminary lemmas are included that are standard in class field theory, but we feel that making the paper as self-contained as possible is worth it.

\medskip

The author would like to thank Gunter Malle for reading through a preliminary version of the manuscript, and particularly Elie Studnia for providing a proof of Proposition \ref{prop:allfinitefields}(i). He would also like to thank the referee for a number of helpful comments.

This work was partially supported by a Royal Society University Research Fellowship.

\section{Preliminaries, Brauer characters and fields of values}

In this article, $H$ denotes a finite group and $\chi$ is an irreducible ordinary character of $H$. Write $d=\chi(1)$. Without loss of generality in this article, we may assume that $\chi$ is faithful. Let $p$ be a prime such that the reduction modulo $p$ of $\chi$ remains irreducible. (This includes all primes not dividing $|H|$ but is in many cases strictly larger.) Write $\mP$ for the set of all such primes $p$, and $\mPb$ for the set of all powers of members of $\mP$. For a prime $p$, let $|H|_{p'}$ denote the $p'$-part of $|H|$.

If $p$ is any prime, we let $k$ denote an algebraically closed field of characteristic $p$. Since one of the main issues in this article is that `reduction modulo $p$' does not necessarily behave well with respect to the sign of bilinear forms, we have to be strict about what we mean. We fix a primitive root of unity of order $n=|H|_{p'}$ in $k$, and identify $\chi$ with an irreducible Brauer character modulo $p$ via the assignment of $\e^{2\pi\I/n}$ to that root of unity. This allows us to talk about subgroups of $\GL_d(k)$ isomorphic to $H$ with Brauer character $\chi$.

For $q$ a power of $p$, let $F_q$ denote the field automorphism of $k$ given by $x\mapsto x^q$. Given $p$, let $\bar H$ denote a copy of $H$ in $\GL_d(k)$ with Brauer character $\chi$, and note that $\bar H$ is necessarily an absolutely irreducible subgroup of $\GL_d(q)$ whenever $\bar H$ is a subgroup of it (since $\chi$ remains irreducible as a Brauer character).

The meanings of the terms $\Sp_d(q)$, $\GO_d^+(q)$, and so on, are the full symmetry group of the appropriate form in $\GL_d(q)$. The \emph{exponent} of a finite group is the lowest common multiple of all orders of all elements of it. Let $\zeta_n=\e^{2\pi\I/n}$ be a primitive $n$th root of unity in $\C$.

\medskip

The first lemma is well known, and gives the minimal field of definition for a finite group embedding in characteristic $p\neq 0$. (The case for characteristic $0$ has no easy answer.) See, for example, \cite[Section 5]{abc}.

\begin{lem}\label{lem:fieldvalues} Let $p\in \mP$ be a prime. The subgroup $\bar H$ of $\GL_d(k)$ is conjugate to a subgroup of $\GL_d(q)$ if and only if the traces (evaluated in $k$) of all elements of $\bar H$ lie in $\F_q$. Thus $H$ embeds in $\GL_d(q)$ with character $\chi$ if and only if, under a map sending a primitive $|H|$th root of unity over $\C$ to one over $k$, the Brauer character values of $\chi$ lie in $\F_q$.

In particular, $\bar H$ is conjugate to a subgroup of $\GL_d(q)$ for $q$ such that the exponent of $H$ divides $q-1$.
\end{lem}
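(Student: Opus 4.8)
The plan is to prove the first (``if and only if'') statement in full and then read off the other two. The forward direction is immediate: if $\bar H$ is $\GL_d(k)$-conjugate into $\GL_d(q)$, then after conjugating all matrix entries, and hence all traces, of elements of $\bar H$ lie in $\F_q$. For the converse, let $V$ be the $kH$-module on which $\bar H$ acts, so $V$ is absolutely irreducible of dimension $d$ and its trace function $t\colon H\to k$ takes values in $\F_q$; I must exhibit $V$ as the scalar extension of an $\F_q H$-module. Since $kH$ is semisimple and its Wedderburn decomposition is already defined over a finite subfield $k_0\subseteq k$, which I may take to contain $\F_q$, there is a $k_0 H$-module $V_0$ with $V_0\otimes_{k_0}k\cong V$; the module $V_0$ is again absolutely irreducible because $\mathrm{End}_{k_0 H}(V_0)\otimes_{k_0}k\cong\mathrm{End}_{kH}(V)=k$, so in particular $\mathrm{End}_{k_0 H}(V_0)=k_0$.

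Next I would show that $V_0$ is isomorphic to all of its Galois conjugates. Put $\Gamma=\mathrm{Gal}(k_0/\F_q)=\langle F_q\rangle$, of order $m=[k_0:\F_q]$. For $\sigma\in\Gamma$ the conjugate module $V_0^{\sigma}$ has trace function $\sigma\circ t$, which equals $t$ since $t$ takes values in $\F_q=k_0^{\Gamma}$; so $V_0$ and $V_0^{\sigma}$ are absolutely irreducible $k_0 H$-modules with the same trace function. Because $p\nmid|H|$ the algebra $kH$ is semisimple and the trace functions of its simple modules are linearly independent, so $V_0^{\sigma}\otimes_{k_0}k\cong V_0\otimes_{k_0}k$; by the Noether--Deuring theorem $V_0^{\sigma}\cong V_0$ over $k_0$.

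Now I would run a descent argument. Using Schur's lemma, fix an $F_q$-semilinear $H$-equivariant bijection $\phi\colon V_0\to V_0$; then $\phi^m$ is a $k_0 H$-automorphism of $V_0$, hence a scalar $\lambda\in k_0^{\times}$, and replacing $\phi$ by $\mu\phi$ replaces $\lambda$ by $N_{k_0/\F_q}(\mu)\,\lambda$. Since the norm $N_{k_0/\F_q}\colon k_0^{\times}\to\F_q^{\times}$ is surjective I can arrange $\phi^m=\mathrm{id}$; then $W=\{v\in V_0:\phi(v)=v\}$ is an $\F_q H$-module with $W\otimes_{\F_q}k_0\cong V_0$ by Galois descent for vector spaces, so $W\otimes_{\F_q}k\cong V$ and an $\F_q$-basis of $W$ conjugates $\bar H$ into $\GL_d(q)$. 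This surjectivity of the norm map (the elementary shadow of the vanishing of the Brauer group of a finite field, equivalently that the Schur index over $\F_q$ is $1$) is the only step of real content; everything else is bookkeeping, and the main risk is simply in marshalling Noether--Deuring, independence of trace functions and the descent correctly.

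Finally, the second statement just restates the first in different language: once a map identifying the primitive $|H|$th roots of unity in $\C$ with those in $k$ is fixed, the reduction modulo $p$ of $\chi$ becomes the trace function of a $kH$-module affording some such $\bar H$, its values being the images under that map of the character values of $\chi$; the first part then says $\bar H$ is conjugate into $\GL_d(q)$ precisely when those images lie in $\F_q$. For the last statement, if the exponent of $H$ divides $q-1$ then every eigenvalue of every element of $\bar H$ is a root of unity whose order divides $q-1$ and so lies in the cyclic group $\F_q^{\times}$ of order $q-1$; hence every trace of an element of $\bar H$ lies in $\F_q$, and the first part applies.
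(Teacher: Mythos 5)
Your argument is correct. The paper does not actually prove the main equivalence: it records it as well known, citing \cite[Section 5]{abc}, and only justifies the final sentence (the trace of an element is a sum of eigenvalues, which are roots of unity of order dividing the exponent of $H$, hence lie in $\F_q^\times$ when the exponent divides $q-1$) --- exactly the argument you give for that part. Your descent proof of the equivalence --- realizing $\bar H$ over a finite subfield $k_0\supseteq\F_q$, using equality of trace functions plus Noether--Deuring to see that $V_0$ is isomorphic to its Frobenius twist, and then rescaling a semilinear intertwiner via surjectivity of the norm $N_{k_0/\F_q}$ (the triviality of Schur indices over finite fields) to perform Galois descent --- is the standard proof lying behind that citation, so you have supplied in full what the paper delegates to the literature; no gaps.
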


The second statement follows from the first since the trace of a matrix is the sum of its eigenvalues, and the eigenvalues are roots of unity of order dividing the exponent of $H$.

There is a subtlety with this lemma, that is not often explicitly mentioned. Let $p=5$ and suppose that $A$ is a $7$-dimensional matrix with eigenvalues $\omega$ six times, and $\omega^2$ once, where $\omega^3=1$. The trace, $-1$, certainly lies in $\F_5$, but there is no matrix in $\GL_7(5)$ with those eigenvalues. So the claim is certainly only true for \emph{irreducible} Brauer characters, and tells us something about the eigenvalues of semisimple elements in irreducible subgroups of $\GL_d(k)$, where $k$ has characteristic $p$.

\begin{lem}\label{lem:matingln} Let $A$ be a matrix in $\GL_d(k)$. The multiset of eigenvalues of $A$ is invariant under the Frobenius endomorphism $F_q$ if and only if $A$ is conjugate to an element of $\GL_d(q)$.
\end{lem}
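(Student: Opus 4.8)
The plan is to prove this via rational canonical forms. The key observation is that two matrices in $\GL_d(q)$ are conjugate in $\GL_d(k)$ if and only if they are conjugate in $\GL_d(q)$ — this is the standard fact that rational canonical form (equivalently, the theory of $k[x]$-modules via $x$ acting as the matrix) is insensitive to field extension, because the invariant factors lie in the base field and are preserved. So the real content is: the $\GL_d(\bar{\F}_q)$-conjugacy class of $A$ meets $\GL_d(q)$ if and only if the multiset of eigenvalues of $A$ (with multiplicity) is $F_q$-stable.

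First I would establish the easy direction. If $A$ is conjugate to some $B \in \GL_d(q)$, then the characteristic polynomial of $A$ equals that of $B$ and has coefficients in $\F_q$; hence the multiset of roots (eigenvalues with multiplicity) is permuted by the Galois action $x \mapsto x^q$, which is exactly $F_q$-invariance of the eigenvalue multiset.

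For the converse, suppose the eigenvalue multiset of $A$ is $F_q$-stable. I would pass to the Jordan (or more precisely, primary rational canonical) decomposition over $k$: the $k[x]$-module $k^d$ (with $x$ acting as $A$) decomposes as a direct sum of modules $k[x]/(x-\lambda)^{m}$. The hypothesis says the multiset $\{(\lambda, m)\}$ of Jordan blocks, or at least the eigenvalue-with-multiplicity data, is $F_q$-stable; one needs slightly more care to see the block \emph{sizes} are matched up correctly under $\lambda \mapsto \lambda^q$ — this follows because the generalized eigenspace dimensions $\dim\ker(A-\lambda)^j$ are determined by the eigenvalue multiplicities once we know them for all $j$, and $F_q$ acts compatibly, so $F_q$ permutes Jordan blocks preserving size. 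Grouping the Galois orbits of eigenvalues, each orbit $\{\lambda, \lambda^q, \dots, \lambda^{q^{r-1}}\}$ with a fixed block size $m$ contributes a primary component whose associated invariant factor is $f(x)^m$ where $f$ is the (monic, $\F_q$-coefficient) minimal polynomial of $\lambda$ over $\F_q$; the companion matrix of $f(x)^m$ lies in $\GL_d(q)$ and is $\GL_d(k)$-conjugate to that primary component. Assembling these companion blocks gives an element of $\GL_d(q)$ conjugate to $A$.

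The main obstacle — really the only subtle point — is ensuring that the $F_q$-action on eigenvalues genuinely lifts to an $F_q$-compatible matching of Jordan block \emph{multiplicities and sizes}, not merely of the eigenvalues as a set; in characteristic $p$ with $A$ possibly non-semisimple this needs the remark that the Jordan type at $\lambda$ and at $\lambda^q$ coincide because applying $F_q$ entrywise to a matrix sends a Jordan block at $\lambda$ of size $m$ to one at $\lambda^q$ of size $m$, and $A$ is (abstractly) fixed by... no — more carefully, one argues that the function $\lambda \mapsto (\text{Jordan type of } A \text{ at } \lambda)$ is constant on $F_q$-orbits because the numbers $\dim_k \ker(A-\lambda)^j$ are obtained from the multiset of eigenvalues of $A$ acting on $\wedge^\bullet$ or simply from ranks of polynomial expressions in $A$, and these ranks only depend on the eigenvalue multiset... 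Since this is a well-known lemma I would state this matching succinctly and refer to standard linear algebra, then conclude.
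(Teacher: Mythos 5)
The converse half of your argument has a genuine gap, and it is exactly at the point you flagged. The claim that the Jordan type of $A$ at $\lambda$ and at $\lambda^q$ must agree ``because the ranks of $(A-\lambda)^j$ only depend on the eigenvalue multiset'' is false: $\dim_k\ker(A-\lambda)^j$ for small $j$ is Jordan-block data, not eigenvalue data, and it is not constrained by $F_q$-stability of the eigenvalue multiset. Concretely, take $\lambda\in\F_{q^2}\setminus\F_q$ and let $A\in\GL_4(k)$ consist of one Jordan block of size $2$ with eigenvalue $\lambda$ together with two blocks of size $1$ with eigenvalue $\lambda^q$. The eigenvalue multiset $\{\lambda,\lambda,\lambda^q,\lambda^q\}$ is $F_q$-invariant, but $A$ is not conjugate to any element of $\GL_4(q)$: a matrix with entries in $\F_q$ is fixed by $F_q$, so its Jordan types at Galois-conjugate eigenvalues coincide, whereas for $A$ they differ. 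So the matching of block sizes you need cannot be extracted from the hypothesis, and no appeal to ``standard linear algebra'' will supply it; the statement for arbitrary $A$ is simply false.

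The resolution is that the lemma is only needed, and only proved in the paper, for semisimple $A$ (elements of $\bar H$ with $p\nmid|H|$ are semisimple; the discussion preceding the lemma makes this the intended setting, and the paper's proof begins ``a semisimple conjugacy class of $\GL_d(k)$ is determined by its multiset of eigenvalues''). If you add that hypothesis, your construction closes the argument cleanly and elementarily: group the eigenvalues into $F_q$-orbits (the multiplicity is constant on each orbit by $F_q$-stability of the multiset), and for each orbit of size $r$ with multiplicity $m$ take $m$ copies of the companion matrix of the degree-$r$ minimal polynomial over $\F_q$; the resulting matrix lies in $\GL_d(q)$ and, being semisimple with the same eigenvalue multiset as $A$, is conjugate to it. This is a genuinely different route from the paper, which argues that $F_q$ stabilizes the semisimple class of $A$ and then invokes a Lang--Steinberg-type result (Malle--Testerman, Theorem 26.7) to produce an $F_q$-fixed representative; your version avoids algebraic-group machinery at the cost of being specific to $\GL_d$, which is all that is needed here. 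So: state the semisimplicity hypothesis, delete the attempted reduction of Jordan types to eigenvalue data, and the proof is fine.
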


For a proof, a semisimple conjugacy class of $\GL_d(k)$ is determined the multiset of eigenvalues of a representative matrix. The Frobenius endomorphism permutes the classes, and this action is determined by its action on the eigenvalues. Thus a conjugacy class is stabilized by the Frobenius endomorphism if and only if the multiset of eigenvalues is. Now apply, for example, \cite[Theorem 26.7]{malletesterman} to obtain a semisimple element of $\GL_d(k)$ fixed by the Frobenius endomorphism whenever the class is stabilized by it.

\medskip

So Lemma \ref{lem:fieldvalues} tells us that, for a simple $kH$-module $M$, $M$ is realizable over $\F_q$ if and only if the multiset of eigenvalues of each element of $H$ on $M$ is invariant under $F_q$. The same will therefore be true of their lifts to $\C$ when performing the Brauer character construction, that they are invariant under $\zeta_n\mapsto\zeta_n^q$, where $\zeta_n$ is an appropriate root of unity and the map is on the field $\Q(\zeta_n)$.

\medskip

We now give a fundamental definition for this article.

\begin{defn} For a given $H$ and $\chi$, a positive integer $n$ is a \emph{defining modulus} if, whenever $q$ is a prime power congruent to $1$ modulo $n$ (and in $\mPb$), the corresponding subgroup $\bar H$ is conjugate to a subgroup of $\GL_d(q)$.
\end{defn}

This is closely related to the concept of a defining modulus in class field theory: the defining modulus of a character is the defining modulus of the smallest subfield of $\mathbb C$ containing its character values.

It is not obvious from this definition, but defining moduli appear in all tables of maximal subgroups of classical groups. The next lemma teases out the first relevance for the problem.

\begin{lem}\label{lem:defmodcyclotomic} An integer $n$ is a defining modulus for $H$ and $\chi$ if and only if the values of $\chi$ lie in the $n$th cyclotomic field $\Q(\zeta_n)$
\end{lem}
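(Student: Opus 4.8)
The plan is to restate the embedding condition of Lemma~\ref{lem:fieldvalues} Galois-theoretically, and then to realise the required Galois automorphisms in the form $x\mapsto x^q$ for honest prime powers $q$ by invoking Dirichlet's theorem on primes in arithmetic progressions.

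Let $e$ be the exponent of $H$; the values of $\chi$ then lie in $\Q(\zeta_e)$, and a prime power $q$ is coprime to $|H|$ precisely when it is coprime to $e$ (since these integers have the same prime divisors). Put $N=\operatorname{lcm}(n,e)$, identify $\operatorname{Gal}(\Q(\zeta_N)/\Q)$ with $(\Z/N\Z)^\times$ via $\sigma_a\colon\zeta_N\mapsto\zeta_N^a$, and recall that the fixed field of the kernel $K$ of the natural surjection $(\Z/N\Z)^\times\to(\Z/n\Z)^\times$ is exactly $\Q(\zeta_n)$. The translation step is: for $q$ coprime to $|H|$, the automorphism $\sigma_q$ corresponds under the reduction $\Q(\zeta_e)\to k$ to the Frobenius $F_q$ on roots of unity, so Lemma~\ref{lem:fieldvalues} says that $\bar H$ is conjugate into $\GL_d(q)$ if and only if $\sigma_q$ fixes every value of $\chi$.

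Given this, the forward implication is immediate: if the values of $\chi$ lie in $\Q(\zeta_n)$ and $q\equiv1\bmod n$ is a prime power coprime to $|H|$, then $\sigma_q$ fixes $\Q(\zeta_n)$ pointwise, hence fixes the values of $\chi$, so $\bar H$ embeds in $\GL_d(q)$; thus $n$ is a distinguishing modulus. For the converse, assume $n$ is a distinguishing modulus; it suffices to show that every $\sigma_a$ with $a\in K$ fixes the values of $\chi$, since then these values lie in $\Q(\zeta_N)^K=\Q(\zeta_n)$. Fix $a\in K\subseteq(\Z/N\Z)^\times$. By Dirichlet's theorem there is a prime $q\equiv a\bmod N$; as $q$ is then coprime to $N$ and hence to $e$, it is coprime to $|H|$, and $q\equiv a\equiv1\bmod n$. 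The distinguishing-modulus hypothesis gives $\bar H\le\GL_d(q)$, so $\sigma_q$ fixes the values of $\chi$; but $e\mid N$ forces $q\equiv a\bmod e$, so $\sigma_q$ and $\sigma_a$ agree on $\Q(\zeta_e)$, and therefore $\sigma_a$ fixes the values of $\chi$, as needed.

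The only non-elementary ingredient, and the step I would flag as the crux, is the appeal to Dirichlet's theorem to hit an arbitrary unit residue class modulo $N$ with a genuine prime; the rest is routine bookkeeping with the Galois correspondence for cyclotomic fields together with a direct application of Lemma~\ref{lem:fieldvalues}. (One could phrase the same input as the surjectivity of the map from prime powers coprime to $|H|$ onto $(\Z/N\Z)^\times$, but some such analytic or class-field-theoretic fact appears unavoidable here.)
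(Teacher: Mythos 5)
Your proof is correct and follows essentially the same route as the paper: the forward direction because $q\equiv 1\bmod n$ makes $\sigma_q$ fix $\Q(\zeta_n)$ pointwise, and the converse by realising each element of $\mathrm{Gal}(\Q(\zeta_N)/\Q(\zeta_n))$ as some $\sigma_q$ for a suitable prime power $q\equiv 1\bmod n$ and invoking the distinguishing-modulus hypothesis via Lemma \ref{lem:fieldvalues}. The only differences are cosmetic: you work with $N=\lcm(n,e)$ ($e$ the exponent) rather than $\lcm(n,|H|)$, and you make explicit the appeal to Dirichlet's theorem that the paper leaves implicit when it simply chooses a prime power in the required residue class.
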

\begin{proof} Let $n'=\lcm(|H|,n)$. Abbreviate $\zeta_n$ by $\zeta$ and $\zeta_{n'}$ by $\xi$.

Suppose first that the character values of $\chi$ lie in $\Q(\zeta)$. Let $q$ (in $\mPb$) be a prime power congruent to $1$ modulo $n$. In order to show that $\bar H$ is conjugate to a subgroup of $\GL_d(q)$, it suffices (by the discussion after Lemma \ref{lem:matingln}) to show that the Brauer character values of $\chi$ are fixed under map $\xi\mapsto\xi^q=\xi$. This is obviously true, and so the result follows.

\medskip

For the converse, suppose that $n$ is a defining modulus. Since $\chi(x)$ lies in $\Q(\xi)$ for all $x\in H$, it suffices to show that $\chi(x)$ is centralized by all elements of $\mathrm{Gal}(\Q(\xi)/\Q(\zeta))$. Viewed as maps in $\mathrm{Gal}(\Q(\xi)/\Q)$, these are given by $\xi\mapsto \xi^a$ for $a\equiv 1\bmod n$. Let $q$ (in $\mPb$) be a prime power congruent to $a$ modulo $n'$, and note that $q\equiv 1\bmod n$. Thus by assumption $\bar H$ is conjugate to a subgroup of $\GL_d(q)$, so the lifts of the eigenvalues of elements of $\bar H$ to $\Q(\xi)$ are invariant under the map $\xi\mapsto \xi^q=\xi^a$. But this is what is required, and the other direction holds.
\end{proof}

This alternative interpretation of defining moduli has the following easy consequence.

\begin{cor} All defining moduli for a fixed $H$ and $\chi$ are multiples of the smallest defining modulus, called the \emph{conductor}. Furthermore, a positive integer $m$ is a defining modulus if and only if $\gcd(m,|H|)$ is.
\end{cor}

This is proved simply by invoking Lemma \ref{lem:defmodcyclotomic} and taking the intersections of the appropriate cyclotomic fields. Like defining moduli, the name `conductor' comes from class field theory.

\medskip

We now introduce defining residues, which explains the choice of this definition. Given $H$ and $\chi$, and a defining modulus $n$, let $I_n$ denote the set of all $i$ between $1$ and $n-1$, and prime to $n$, such that the field isomorphism on $\Q(\zeta_n)$ induced by $\zeta_n\mapsto \zeta_n^i$  fixes each value of $\chi$. The set $I_n$ is called the set of \emph{defining residues}. Notice that the set of defining residues is a subgroup of the unit group $(\Z/n\Z)^\times$. We often abuse terminology and say that an integer $a$ lies in $I_n$ if $a\bmod n$ lies in $I_n$. Particularly, we do this with negative numbers, so $-i$ lies in $I_n$ for $1\leq i\leq n-1$ to mean $n-i\in I_n$.

\begin{thm} For a fixed $H$ and $\chi$, let $n$ be a defining modulus dividing $|H|$. For all prime powers $q$ (in $\mPb$), $\bar H$ is conjugate to a subgroup of $\GL_d(q)$ if and only if the congruence class of $q$ modulo $n$ appears in $I_n$. In particular, if $q'$ (in $\mPb$) is another prime power and $q\equiv q'\bmod n$, then one may embed $H$ in $\GL_d(q)$ with character $\chi$ if and only if one can do the same with $\GL_d(q')$.
\end{thm}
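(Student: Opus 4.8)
The plan is to deduce this theorem directly from Lemma~\ref{lem:defmodcyclotomic} together with the definition of distinguishing residues, by reducing everything to a statement about Galois actions on the cyclotomic field $\Q(\zeta_n)$. Since $n$ divides $|H|$, the character values of $\chi$ already lie in $\Q(\zeta_{|H|})$, and by Lemma~\ref{lem:defmodcyclotomic} (applied with the distinguishing modulus $n$) they in fact lie in the subfield $\Q(\zeta_n)$. The key reformulation is the one recorded in the discussion after Lemma~\ref{lem:matingln}: $\bar H$ is conjugate to a subgroup of $\GL_d(q)$ if and only if the multiset of eigenvalues of each element of $H$ is $F_q$-invariant, equivalently (lifting to $\C$) the Brauer character values are fixed by the map $\zeta_m\mapsto\zeta_m^q$ on $\Q(\zeta_m)$, where $m$ is the exponent of $H$ (or any multiple of it, e.g.\ $|H|$).

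First I would observe that since all character values lie in $\Q(\zeta_n)$, whether they are fixed by $\zeta_{|H|}\mapsto\zeta_{|H|}^q$ depends only on the restriction of this automorphism to $\Q(\zeta_n)$, which is $\zeta_n\mapsto\zeta_n^{q}$ — and this in turn depends only on the residue of $q$ modulo $n$. So the condition ``$\bar H$ embeds in $\GL_d(q)$'' is equivalent to ``$\zeta_n\mapsto\zeta_n^{q\bmod n}$ fixes every value of $\chi$'', which by the very definition of $I_n$ is exactly the condition that $q\bmod n$ lies in $I_n$. (One should note the minor point that $\gcd(q,|H|)=1$ forces $\gcd(q,n)=1$, so $q\bmod n$ is indeed a unit in $\Z_n^\times$ and the statement ``$q\bmod n\in I_n$'' makes sense.) This gives the first, ``if and only if'', assertion.

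For the ``in particular'' clause, if $q\equiv q'\bmod n$ with both prime powers prime to $|H|$, then $q\bmod n\in I_n\iff q'\bmod n\in I_n$ trivially, so by the first part $H$ embeds in $\GL_d(q)$ with character $\chi$ precisely when it embeds in $\GL_d(q')$ with character $\chi$.

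I do not expect any serious obstacle here: the theorem is essentially a repackaging of Lemma~\ref{lem:defmodcyclotomic} once one has set up the language of distinguishing residues, and the only thing requiring a little care is keeping straight the three fields in play ($\Q(\zeta_n)$, $\Q(\zeta_{|H|})$, and $k$ of characteristic $p$) and the identification of the Frobenius action on eigenvalues with a cyclotomic Galois automorphism via the chosen root-of-unity correspondence. If anything is mildly delicate, it is justifying that ``embeds in $\GL_d(q)$ with character $\chi$'' unambiguously means ``$\bar H$ (the fixed copy in $\GL_d(k)$ with Brauer character $\chi$) is conjugate into $\GL_d(q)$'', which is already settled by Lemma~\ref{lem:fieldvalues} and the remarks following it; so the argument is short.
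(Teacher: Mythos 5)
Your argument is correct and is essentially the paper's own proof: both reduce the embedding condition, via Lemma~\ref{lem:fieldvalues} and the discussion after Lemma~\ref{lem:matingln}, to the statement that the lifted character/eigenvalue data are fixed by $\zeta\mapsto\zeta^q$, observe that this depends only on the restriction of that automorphism to $\Q(\zeta_n)$ and hence only on $q\bmod n$, and conclude by the definition of $I_n$. If anything, your explicit passage through $\Q(\zeta_{|H|})$ before restricting to $\Q(\zeta_n)$ is slightly more careful than the paper's wording, but it is the same argument.
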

\begin{proof} This proof has the same ideas as that of Lemma \ref{lem:defmodcyclotomic}, so we are a little briefer this time. The subgroup $\bar H$ is conjugate to a subgroup of $\GL_d(q)$ if and only if the eigenvalues of elements of $\bar H$, lifted to $\Q(\zeta_n)$, are invariant under the map $\zeta_n\mapsto\zeta_n^q$. This statement is equivalent to the statement that $q\bmod n$ lies in $I_n$ by definition of $I_n$, and thus $q\bmod n$ lies in $I_n$ if and only if $\bar H$ is conjugate to a subgroup of $\GL_d(q)$, as claimed.
\end{proof}

Finally, we need a well-known result on when $\bar H$, which is already conjugated to lie in $\GL_d(q)$, also lies in another classical group. Basically, the answer is `whenever they stabilize the appropriate form', so there is no need to worry that we might need to increase $q$ to find $\bar H$ inside the classical group.

\begin{lem}\label{lem:conjtosubgroup} Suppose that $\chi$ is real-valued, irreducible, and $d>1$.
\begin{enumerate}
\item If $d$ is odd then $\bar H$ is conjugate to a subgroup of $\GO_d(q)$ if and only if it is conjugate to a subgroup of $\GL_d(q)$.

\item If $d$ is even and $\chi$ has indicator $-1$, then $\bar H$ is conjugate to a subgroup of $\Sp_d(q)$ if and only if it is conjugate to a subgroup of $\GL_d(q)$.

\item If $d$ is even and $\chi$ has indicator $+1$, $\bar H$ is conjugate to a subgroup of exactly one of $\GO_d^+(q)$ and $\GO_d^-(q)$ via $\chi$ if and only if it is conjugate to a subgroup of $\GL_d(q)$.
\end{enumerate}
\end{lem}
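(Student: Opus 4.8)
The ``only if'' directions of all three parts are immediate, since $\GO_d(q)$, $\Sp_d(q)$ and $\GO_d^{\pm}(q)$ are by definition subgroups of $\GL_d(q)$; so the content lies in the ``if'' directions, and the plan is to reduce all three to a study of the $H$-invariant bilinear forms on the module affording $\chi$ over $\F_q$.

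First I would set up the form. Suppose $\bar H$ is conjugate into $\GL_d(q)$ and let $M$ be the resulting $\F_q H$-module; as remarked in the preliminaries $\bar H$ is then absolutely irreducible over $\F_q$, so $\mathrm{End}_{\F_q H}(M)=\F_q$ by Schur's lemma. Since $\chi$ is real valued, the dual module $M^{*}$ affords the same Brauer character $\chi$, and hence $M\cong M^{*}$ as $\F_q H$-modules (both being absolutely irreducible and defined over $\F_q$). Therefore the space $\mathrm{Hom}_{\F_q H}(M,M^{*})$ of $H$-invariant bilinear forms on $M$ is one-dimensional over $\F_q$, and any non-zero element $B$ of it is non-degenerate, since its kernel is a proper submodule of the irreducible $M$.

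Next I would pin down the type of $B$. The transposed form $B^{\mathrm{t}}(x,y):=B(y,x)$ is again $H$-invariant, so $B^{\mathrm{t}}=\lambda B$ for some $\lambda\in\F_q^{\times}$; transposing twice gives $\lambda^{2}=1$, and as $q$ is odd this forces $B$ to be symmetric ($\lambda=1$) or alternating ($\lambda=-1$). To see that this dichotomy is governed by the Frobenius--Schur indicator of $\chi$, I would lift to characteristic $0$: the complex module affording $\chi$ carries a unique $H$-invariant form up to scalar, symmetric or alternating according as the indicator is $+1$ or $-1$ by the classical Frobenius--Schur theory, and because $p\nmid|H|$ this indicator coincides with that of the reduction mod $p$. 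I expect this identification — matching the sign $\lambda$ computed over $\F_q$ with the indicator of $\chi$ — to be the one step that needs genuine care; everything else is formal.

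Finally I would assemble the cases. If the indicator is $-1$, then $B$ is non-degenerate alternating, so $d$ is even and $\bar H\leq\Sp(M,B)$, which is $\GL_d(q)$-conjugate to $\Sp_d(q)$; this gives (ii). If the indicator is $+1$, then $B$ is non-degenerate symmetric and $\bar H\leq\GO(M,B)$. For $d$ odd, all non-degenerate symmetric bilinear forms over $\F_q$ become equivalent after a scalar (scaling by a non-square multiplies the discriminant by $\mu^{d}$, a non-square since $d$ is odd), so there is a single $\GL_d(q)$-class $\GO_d(q)$, giving (i). For $d$ even, a non-degenerate symmetric form over $\F_q$ has a type $\varepsilon\in\{+,-\}$, so $\bar H\leq\GO_d^{\varepsilon}(q)$; and $\bar H$ lies in this one alone, since every $H$-invariant form is a scalar multiple $\mu B$, whose discriminant differs from that of $B$ by $\mu^{d}$, a square as $d$ is even, so $\bar H$ preserves no form of the opposite type. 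This yields (iii). Throughout, the hypotheses $d>1$ and $q$ odd (the latter because $\chi$ real and non-trivial forces $2\mid|H|$, while $q$ is prime to $|H|$) are exactly what make these orthogonal and symplectic groups, and the symmetric/alternating dichotomy, behave as stated.
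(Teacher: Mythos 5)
Your proof is correct and follows essentially the same route as the paper: the paper's (very brief) argument likewise rests on the uniqueness of the $H$-invariant bilinear form (via the unique trivial submodule of the symmetric or exterior square), which gives existence in the symplectic/orthogonal group and, for $d$ even, excludes lying in both $\GO_d^+(q)$ and $\GO_d^-(q)$. Your write-up simply makes explicit the details the paper leaves implicit, notably the matching of the symmetric/alternating dichotomy with the Frobenius--Schur indicator under reduction mod $p\nmid|H|$ and the observation that scaling the form by $\mu$ changes the discriminant by $\mu^d$, so the type is well defined for $d$ even and irrelevant for $d$ odd.
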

\begin{proof} Since the exterior or symmetric square of a simple module has a unique trivial submodule over any field, any module with character $H$ stabilizes a unique bilinear form (quadratic form for $p=2$). This is enough to prove existence of $H$ in symplectic and orthogonal groups.

In the third case, the uniqueness of the form means that $H$ cannot lie in both plus and minus type orthogonal groups.
\end{proof}

There is a nice way to see whether a character is real-valued from the set $I_n$.

\begin{lem}\label{lem:charselfudal} Let $n$ be a defining modulus for a given $H$ and $\chi$, and $I_n$ the defining residues. The following are equivalent:
\begin{enumerate}
\item $\chi$ is real-valued;
\item $-1$ lies in $I_n$;
\item $I_n$ is closed under taking negatives, i.e., $-i\in I_n$ whenever $i\in I_n$.
\end{enumerate}
\end{lem}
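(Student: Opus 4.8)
The plan is to deduce all three equivalences from two ingredients: first, since $n$ is a distinguishing modulus, Lemma \ref{lem:defmodcyclotomic} tells us that every value $\chi(x)$ lies in $\Q(\zeta_n)$, so it is meaningful to test these values against the elements of $\mathrm{Gal}(\Q(\zeta_n)/\Q)\cong\Z_n^\times$; second, as already noted before the statement, $I_n$ is a subgroup of $\Z_n^\times$, and in particular $1\in I_n$ since the identity automorphism fixes every character value. The whole argument is then a short unwinding of the definition of $I_n$ together with the standard fact that complex conjugation restricted to $\Q(\zeta_n)$ is precisely the automorphism $\zeta_n\mapsto\zeta_n^{-1}$ (because $|\zeta_n|=1$).

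For the equivalence of (1) and (2), I would argue as follows. A character is real valued exactly when each of its values is fixed by complex conjugation; since all values of $\chi$ lie in $\Q(\zeta_n)$, this is the same as saying that each $\chi(x)$ is fixed by the automorphism $\zeta_n\mapsto\zeta_n^{-1}$. By the very definition of $I_n$, this happens if and only if $-1\bmod n$ belongs to $I_n$ (recall the convention, stated just before Lemma \ref{lem:charselfudal}, that $-1\in I_n$ means $n-1\in I_n$). Hence $\chi$ is real valued precisely when $-1\in I_n$.

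For the equivalence of (2) and (3), one direction is immediate: if $I_n$ is closed under negation then, applying this to $1\in I_n$, we get $-1\in I_n$. Conversely, if $-1\in I_n$, then for any $i\in I_n$ we have $-i\equiv(-1)\cdot i\bmod n$, and since $I_n$ is closed under multiplication (being a subgroup of $\Z_n^\times$) we conclude $-i\in I_n$; thus $I_n$ is closed under negation. I do not anticipate any genuine obstacle here — the only point that requires care is the appeal to Lemma \ref{lem:defmodcyclotomic} to guarantee that the character values actually lie in $\Q(\zeta_n)$, without which the identification of ``real valued'' with ``fixed by $\zeta_n\mapsto\zeta_n^{-1}$'' would not be available.
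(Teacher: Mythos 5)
Your proposal is correct and follows essentially the same route as the paper: the equivalence of (2) and (3) from the group structure of $I_n$, and the equivalence of (1) and (2) by identifying complex conjugation on $\Q(\zeta_n)$ with $\zeta_n\mapsto\zeta_n^{-1}$. Your explicit appeal to Lemma \ref{lem:defmodcyclotomic} to place the character values in $\Q(\zeta_n)$ is a reasonable bit of added care, but the argument is the paper's own.
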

\begin{proof} The equivalence of the second and third statements follows immediately from the fact that $I_n$ is a group under multiplication. Also, $-1$ lies in $I_n$ if and only if the map $\zeta_n\mapsto \zeta_n^{-1}$ (i.e., complex conjugation) stabilizes $\chi$. But this is true if and only if $\chi$ is real-valued.
\end{proof}

%We now want to understand, given $H$ and $\chi$, yielding a subgroup $\bar H$ of $\GL_d(\C)$, and an automorphism $\phi$ of $H$, whether some element of $\GL_d(\C)$ induces $\phi$ on $\bar H$. This is \cite[Lemma 1.8.6]{bhrd}.
%
%\begin{lem} Let $\bar H$ be a subgroup of $\GL_d(\C)$ isomorphic to $H$, acting with character $\chi$. If $\phi\in\Aut(\bar H)$, then $\phi$ lies in $\Aut_{\GL_d(\C)}(H)$ if and only if $\phi$ \emph{stabilizes} $\chi$, i.e., $\chi(x^\phi)=\chi(x)$ for all $x\in H$.
%\end{lem}
%\begin{proof} The `only if' direction is clear. For the `if' direction, let $K=H\rtimes \langle \phi\rangle$, and induce $\chi$ up to $K$. Let $\psi$ be a constituent of this induction. By the Mackey formula, $\psi$ restricts to $H$ as a sum of $n$ copies of $\chi$, where $n=\psi(1)/\chi(1)$. Then Frobenius reciprocity shows that $\psi$ appears $n$ times in the induction of $\chi$ to $K$.
%
%
%CONTINUE HERE
%\end{proof}

\section{Ennola duality for classical groups}

Our first goal is to state and prove Ennola duality for subgroups of $\GL$ and $\GU$. It is not quite as simple as replacing $q$ by $-q$, but it is close. If $\chi$ is real-valued then $H$ embeds in a symplectic or orthogonal group, hence embeds in both $\GL_d(q)$ and $\GU_d(q)$ whenever it embeds in one of them. Thus we are most interested in the case where $\chi$ is not real-valued, or equivalently that there exists $i\in I_n$ such that $-i\not\in I_n$ by Lemma \ref{lem:charselfudal}.

\begin{thm}\label{thm:ennolatypeA} Let $H$ be a finite group and let $\chi$ be a faithful irreducible character of $H$. Let $n$ be a defining modulus and $I_n$ the set of defining residues. For a power $q$ of a prime $p$ in $\mP$, the group $H$ embeds in $\GU_d(q)$ via $\chi$ if and only if $-q\bmod n$ lies in $I_n$.
\end{thm}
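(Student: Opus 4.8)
The plan is to mirror the proof of the theorem characterizing when $\bar H \subseteq \GL_d(q)$, replacing the Frobenius $F_q$ with the twisted Frobenius governing $\GU_d(q)$. Recall that $\GU_d(q)$ is the fixed-point subgroup of $\GL_d(k)$ under the endomorphism $\sigma$ that acts as $F_q$ composed with the inverse-transpose (adjoint) automorphism; on a semisimple conjugacy class, determined by its multiset of eigenvalues, $\sigma$ acts by sending the eigenvalue multiset $\{\lambda_i\}$ to $\{\lambda_i^{-q}\}$. So the first step is to record, as the $\GU$-analogue of Lemma~\ref{lem:matingln}, that a matrix $A \in \GL_d(k)$ is conjugate to an element of $\GU_d(q)$ if and only if its multiset of eigenvalues is invariant under $\lambda \mapsto \lambda^{-q}$; this follows from the same argument (semisimple classes are permuted by $\sigma$ according to the action on eigenvalues, and a $\sigma$-stable class contains a $\sigma$-fixed point by, e.g., \cite[Theorem 26.7]{malletesterman}), together with the fact that an absolutely irreducible subgroup whose traces lie in the relevant fixed field embeds in the corresponding form group — the unitary analogue of Lemma~\ref{lem:fieldvalues}, which is equally standard.

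Next I would transfer this to Brauer characters exactly as in the discussion following Lemma~\ref{lem:matingln}: fixing a compatible choice of roots of unity in $k$ and in $\C$, the module underlying $\chi$ is realizable over the Hermitian structure (i.e. $H$ embeds in $\GU_d(q)$ via $\chi$) if and only if, for every $x \in H$, the lifted multiset of eigenvalues of $x$ is invariant under $\zeta_{n'} \mapsto \zeta_{n'}^{-q}$, where $n' = \lcm(|H|, n)$. Because $n$ is a distinguishing modulus, the character values $\chi(x)$ — and in fact the whole eigenvalue data, since we are dealing with irreducible Brauer characters — are defined over $\Q(\zeta_n)$; so the condition "$\zeta_{n'} \mapsto \zeta_{n'}^{-q}$ fixes all the eigenvalue multisets" is equivalent to "$\zeta_n \mapsto \zeta_n^{-q}$ fixes all values of $\chi$", i.e. to $-q \bmod n$ lying in $I_n$ by the very definition of the distinguishing residues. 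This gives both directions at once.

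One point needs care, and I expect it to be the main (minor) obstacle: the subtlety flagged after Lemma~\ref{lem:fieldvalues}, that invariance of \emph{traces} under a field automorphism is strictly weaker than invariance of \emph{eigenvalue multisets}, so one must genuinely work with the irreducibility hypothesis rather than just with $\chi$ as a trace function. Concretely, one must be sure that "$-q \bmod n \in I_n$" — a statement about $\chi$-values — really does force the eigenvalue multiset of each element to be $(\lambda \mapsto \lambda^{-q})$-stable, not merely trace-stable. This is exactly the content already extracted in the paragraph after Lemma~\ref{lem:matingln} (for an irreducible module, fixing all trace values under $\zeta_n \mapsto \zeta_n^j$ is equivalent to fixing all eigenvalue multisets), so the resolution is to cite that discussion; but it is the step where the argument is not purely formal. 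A secondary bookkeeping point is the passage between $n$ and $n' = \lcm(|H|,n)$, handled as in Lemma~\ref{lem:defmodcyclotomic}: one chooses the prime power $q$ inside a suitable residue class modulo $n'$ (possible by Dirichlet), which does not change $q \bmod n$ and hence does not change whether $-q \bmod n \in I_n$. With these in hand the proof is a direct adaptation and needs no new ideas.
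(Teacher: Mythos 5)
Your overall reduction is the right shape: translate ``$H$ embeds in $\GU_d(q)$ via $\chi$'' into the condition that the lifted Brauer character is fixed by $\zeta\mapsto\zeta^{-q}$, and observe that this is literally the condition $-q\bmod n\in I_n$; the bookkeeping between $n$ and $n'=\lcm(|H|,n)$ is also fine, and the converse direction genuinely is element-wise and easy (if a conjugate of $\bar H$ lies in $\GU_d(q)$ then the twisted Frobenius fixes it pointwise, so every eigenvalue multiset, hence $\chi$, is $(\lambda\mapsto\lambda^{-q})$-stable). The genuine gap is at the step you declare ``equally standard''. Your unitary analogue of Lemma~\ref{lem:matingln} only shows that each \emph{individual} element of $\bar H$ is conjugate into $\GU_d(q)$; exactly as in the linear case, where Lemma~\ref{lem:matingln} by itself does not prove Lemma~\ref{lem:fieldvalues}, this does not produce a \emph{single} conjugating element taking all of $\bar H$ into $\GU_d(q)$. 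Moreover the supplementary fact you lean on is not correctly stated: $\lambda\mapsto\lambda^{-q}$ is not a field automorphism, so there is no ``relevant fixed field'' in which traces could lie; the true criterion is that the Frobenius twist of the module is isomorphic to its dual, and passing from that isomorphism of modules to an actual conjugate of the subgroup inside the fixed-point group $\GU_d(q)$ is precisely where the content of the theorem sits. By contrast, the ``main obstacle'' you flag (traces versus eigenvalue multisets) is a side issue.

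The paper supplies exactly this missing descent, and it is not a formality: writing $\sigma$ for $F_q$ composed with the inverse--transpose automorphism, the hypothesis $-q\bmod n\in I_n$ (after reducing to $\chi$ not real valued via Lemma~\ref{lem:charselfudal}) shows $\sigma$ fixes $\chi$ and hence stabilizes the $\GL_d(k)$-class of $\bar H$; then \cite[Theorem 21.11]{malletesterman} gives a conjugate normalized by $\sigma$, \cite[Lemma 1.8.6]{bhrd} shows $\sigma$ acts on it as an element of $\GL_d(k)$ normalizing it, and \cite[Corollary 21.8]{malletesterman} (a Lang--Steinberg argument, using absolute irreducibility so that the relevant centralizer is the scalars) lets one adjust so that $\sigma$ centralizes a conjugate of $\bar H$, which therefore lies in $\GU_d(q)$. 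If you replace your ``equally standard'' sentence by a citation of the precise criterion for an absolutely irreducible group to preserve a unitary form (as in \cite[Section 1.8]{bhrd}), or by the descent argument just described, your proof closes up and is then essentially the paper's argument repackaged; as written, the step carrying the theorem's content is asserted rather than proved, and the justification you sketch for it would not suffice.
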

\begin{proof} By the remark preceding the theorem, we may assume that $\chi$ is not real-valued, i.e., $-1\not\in I_n$. By assumption, for any $x\in \bar H$, the eigenvalues of $x$ are permuted by the map $\zeta_n\mapsto\zeta_n^i$, but not by the map $\zeta_n\mapsto \zeta_n^{-i}$, the composition of the former map and $\zeta_n\mapsto\zeta_n^{-1}$. Hence $\zeta_n\mapsto \zeta_n^{-i}$ maps $\chi$ to $\bar\chi$, the complex conjugate, of $\chi$, and $\bar\chi\neq \chi$. Since the graph automorphism acts as inverse transpose, for $q\equiv -i\bmod n$, the composition $\sigma$ of $F_q$ with the graph automorphism stabilizes $\chi$.

This is enough to prove that $\bar H$ is conjugate to a subgroup of $\GU_d(q)$. To see this, note first that, since $\bar H$ is unique up to conjugacy subject to having character $\chi$, $\sigma$ stabilizes the $\GL_d(k)$-conjugacy class containing $\bar H$, so normalizes some member of it by \cite[Theorem 21.11]{malletesterman}. Then we apply \cite[Lemma 1.8.6]{bhrd}, which states that $\sigma$ must therefore act as some element of $\GL_d(k)$ that normalizes (a conjugate of) $\bar H$. Then we apply \cite[Corollary 21.8]{malletesterman}, which implies that $\sigma$ then centralizes some conjugate of $\bar H$, and thus some conjugate of $\bar H$ lies inside the fixed points of $\sigma$, namely $\GU_d(q)$.

To see the converse, if both $F_q$ and the product with the graph automorphism centralize (a conjugate of) $H$ then the graph automorphism acts as an element of the normalizer in $\GL_d(k)$ of $H$. Such an element stabilizes $\chi$, but the graph automorphism maps $\chi$ to its complex conjugate. Thus $\chi$ is real-valued and so $-1\in I_n$, a contradiction.
\end{proof}

We now move on to the other classical groups. Types $\tB$ and $\tC$ are easy, since there is no twisted type to be concerned about.

\begin{prop}\label{prop:ennolatypeB} Given $H$ and $\chi$, suppose that $\chi$ is real-valued. Furthermore, suppose that $\chi$ has indicator $-1$, or $\chi(1)$ is odd. If $n$ is a defining modulus and $I_n$ the defining residues, then $i\in I_n$ if and only if $-i\in I_n$.

Consequently, if $q\equiv -q'\bmod n$, and $q$ and $q'$ are prime powers in $\mPb$, $H$ embeds in $\Sp_d(q)$ if and only if $H$ embeds in $\Sp_d(q')$ (via $\chi$), and similarly for $\GO_d(q)$ and $\GO_d(q')$.
\end{prop}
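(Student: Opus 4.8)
The plan is to derive the proposition directly from the results already in place, so the argument should be short. First I would dispatch the statement about $I_n$: since $\chi$ is real valued, Lemma~\ref{lem:charselfudal} gives $-1\in I_n$, and because $I_n$ is a subgroup of $\Z_n^\times$ it is then closed under negation, which is exactly the assertion that $i\in I_n$ if and only if $-i\in I_n$. (If $n$ happens not to divide $|H|$, I would first replace it by $\gcd(n,|H|)$, still a distinguishing modulus by the corollary to Lemma~\ref{lem:defmodcyclotomic}, so that the dictionary between membership in $I_n$ and conjugacy into $\GL_d(q)$ is available; closure under negation is unaffected, being equivalent to $\chi$ being real valued.)

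For the consequence, I would fix prime powers $q,q'$ coprime to $|H|$ with $q\equiv -q'\bmod n$ and split into the two mutually exclusive cases permitted by the hypothesis. If $\chi$ has indicator $-1$ then $d$ is even, and by Lemma~\ref{lem:conjtosubgroup}(ii) the subgroup $\bar H$ is conjugate to a subgroup of $\Sp_d(q)$ if and only if it is conjugate to a subgroup of $\GL_d(q)$; if instead $\chi(1)=d$ is odd, then the indicator is $+1$ and by Lemma~\ref{lem:conjtosubgroup}(i) $\bar H$ is conjugate to a subgroup of $\GO_d(q)$ if and only if it is conjugate to a subgroup of $\GL_d(q)$. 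In either case, by the theorem stating that $\bar H$ is conjugate to a subgroup of $\GL_d(q)$ precisely when $q\bmod n\in I_n$, the question reduces to whether $q\bmod n$ lies in $I_n$. Since $q\equiv -q'\bmod n$ and $I_n$ is closed under negation, this holds if and only if $q'\bmod n\in I_n$, which by the same chain of equivalences is equivalent to $H$ embedding in $\Sp_d(q')$ (respectively $\GO_d(q')$) via $\chi$. This is exactly the claimed equivalence.

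I do not expect a genuine obstacle: the proposition is a formal consequence of Lemmas~\ref{lem:charselfudal} and~\ref{lem:conjtosubgroup} together with the $I_n$--embedding correspondence. The only points needing care are bookkeeping ones: verifying that the two hypotheses (indicator $-1$, or $\chi(1)$ odd) are precisely those for which $\bar H$ lands in a classical group with no $\pm$ ambiguity, so that the one remaining case, $d$ even with indicator $+1$, is exactly the situation in which $\GO_d^+$ and $\GO_d^-$ get interchanged by the duality (the type-$\tD$ phenomenon treated later); and the reduction to $n\mid|H|$ so that the $I_n$ formalism applies.
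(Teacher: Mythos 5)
Your argument is correct and is essentially the paper's: the paper proves this proposition by declaring it immediate from Lemma~\ref{lem:charselfudal} (with the embedding correspondence and Lemma~\ref{lem:conjtosubgroup} implicitly supplying the consequence), which is exactly the chain you spell out. Your extra care about reducing to $n\mid|H|$ is a sensible bookkeeping addition but not a departure from the paper's route.
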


The proof is immediate, from Lemma \ref{lem:charselfudal}.

\subsection{Groups of type $\tD$}

Suppose that $\chi$ has Frobenius--Schur indicator $+1$, so that $\chi$ is the character of a real representation. Let $n$ be a defining modulus and $q\bmod n\in I_n$. Then $\bar H$ is conjugate to a subgroup of $\GO^\ep_d(q)$ for some $\ep=\pm 1$ by Lemma \ref{lem:conjtosubgroup}, but deciding which is a significant issue. It comes down to knowing the determinant of the Gram matrix for the bilinear form stabilized by $\bar H$. For groups of type $\tD$ we restrict to the case where the prime powers in $\mPb$ are odd, because this description in terms of Gram matrices does not apply to $p=2$.

\begin{defn} Given $H$, and $\chi$ with indicator $+1$, a defining modulus $m$ is a \emph{discriminating modulus} if the set $I_m$ can be partitioned into two disjoint subsets $I_m^+\cup I_m^-$, such that $\bar H$ is conjugate to a subgroup of $\GO^+_d(q)$ ($q$ odd in $\mPb$) if and only if $q\bmod m$ lies in $I_m^+$. The sets $I_m^+$ and $I_m^-$ are the positive and negative \emph{discriminating residues}.
\end{defn}

If $\chi$ is afforded by a real representation then not all defining moduli are discriminating moduli. This can be seen most obviously with representations over $\Z$, where $1$ is a defining modulus (assuming $q$ is odd), but certainly there are subgroups of $\GO_d^-(q)$ that come from $\Z$-representations, such as the alternating group $A_7$, which lies in $\GO_6^-(q)$ for $q\equiv 3,5,6\bmod 7$. As with defining moduli, the set of discriminating moduli is of the form $m\N$ for some minimal modulus. However, it is not clear that discriminating moduli always exist. To examine this problem we need a bit more background.

First, changing basis for the bilinear form multiplies the determinant by a square, so we can only determine the determinant up to a square. For example, if the determinant lies in $\Z$, then it can be given by a square-free integer $m$. A representative of the determinant in $(F^\times)/(F^\times)^2$ (where the matrix is defined over $F$) is called the \emph{discriminant} of the form. For fields $\F_q$, the set $|(\F_q^\times)/(\F_q^\times)^2|$ has order $2$.

Over $\Q$, the minimal square-free integer $m$ must be odd: the reduction modulo $2$ of the matrix $M$ of the form must be the matrix of a symmetric, hence skew-symmetric bilinear form, and so modulo the radical it has even dimension. Thus an even number of eigenvalues must be even, and $4\nmid m$. Also, $m$ is positive:  by extending the field to $\R$, we may change basis so that the matrix of the form is diagonal with entries $\pm 1$. The subspace spanned by basis elements with norm $1$ yields an invariant subspace, as does that with norm $-1$. Thus one of these subspaces is zero as the subgroup $\bar H$ of $\GL_d(k)$ is irreducible. But the dimension $d$ is even, so the determinant is always positive. The statement that all eigenvalues of $M$ must be positive (or all must be negative) holds for any $H$ and $\chi$, not just $\Z$-representations.

One can also see that if $m$ is divisible by a prime $r$ then either all eigenvalues of the matrix are divisible by $r$, in which case we can multiply by a scalar to remove them, or the reduction modulo $r$ of the form has a non-zero radical. Thus, in particular, the reduction modulo $r$ of $\chi$ cannot be an irreducible Brauer character. This proves that $r$ a product of primes not belonging to $\mP$.

\medskip

We now need the discriminant of the form for the two orthogonal groups $\GO_d^\ep(q)$. If $d$ is divisible by $4$, or $d\equiv 2\bmod 4$ and $q\equiv 1\bmod 4$ then the discriminant for $\GO_d^+(q)$ is a square, and if $d\equiv 2\bmod 4$ and $q\equiv 3\bmod 4$ then the determinant is a non-square. The discriminant for $\GO_d^+(q)$ is a square in $\F_q$ if and only if the discriminant for $\GO_d^-(q)$ is a non-square. Thus if $\bar H$ is a subgroup of $\GL_d(q)$ and we can determine the discriminant for the invariant bilinear form then we know which of the two orthogonal groups $\bar H$ embeds into.

Ennola duality for type $\tD_n$ with $n$ odd goes through as before, at least if the discriminant over $\Q$ is an odd positive integer. (This therefore includes the case where the representation is over $\Z$.) Although this proposition is later subsumed into a more general result, representations over $\Z$ are common enough for it to be useful to have the formulae derived in the proof.

\begin{prop}\label{prop:typeDinteger} Let $H$ and $\chi$ be given, and suppose that $\chi$ has indicator $+1$, arising from a representation over $\Z$. Suppose that $d\equiv 2\bmod 4$, and that the discriminant of the form is an odd positive integer $m$. Let $n$ be a multiple of $4m$ such that $n$ is a defining modulus for $\chi$.
\begin{enumerate}
\item The number $n$ is a discriminating modulus for $H$ and $\chi$.
\item The set $I_n^+$ is a subgroup of index $2$ in $I_n$ and $I_n^-=\{-i:i \in I_n^+\}$.
\end{enumerate}
\end{prop}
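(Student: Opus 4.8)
The plan is to compute explicitly the discriminant of the $\bar H$-invariant bilinear form over each field $\F_q$ in terms of the (integer) discriminant $m$ over $\Q$, and then match this against the known discriminant of $\GO_d^\pm(q)$ recorded above. Fix a $\Q$-form of the representation and a Gram matrix $M$ for the invariant symmetric bilinear form, so that $\det(M)$ represents $m$ in $\Q^\times/(\Q^\times)^2$; since $m$ is odd and $p\nmid|H|$ we also have $p\nmid m$ (any prime dividing $m$ forces the reduction to be reducible, as noted), so reducing $M$ modulo $p$ gives a nondegenerate form over $\F_q$ whose discriminant is the image of $m$ in $\F_q^\times/(\F_q^\times)^2$. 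Thus $\bar H\leq\GO_d^+(q)$ precisely when $m$ is a square in $\F_q$ together with the parity conditions on $d$ and $q$ listed before the proposition: for $d\equiv 2\bmod 4$, $\GO_d^+(q)$ has square discriminant iff $q\equiv 1\bmod 4$. So $\bar H\leq\GO_d^+(q)$ iff ``$m$ is a square mod $q$ and $q\equiv1\bmod4$'' or ``$m$ is a non-square mod $q$ and $q\equiv3\bmod4$'' — equivalently, iff the Jacobi-type symbol $\left(\tfrac{m}{q}\right)\cdot(-1)^{(q-1)/2}=1$, i.e. iff $\left(\tfrac{-m}{q}\right)=1$ (using $q$ odd), or more precisely iff $\left(\tfrac{m^*}{q}\right)=1$ where $m^*=(-1)^{(q-1)/2}m$; the point is that this is governed by whether $-m$ (as an element of $\Q$, with the sign built in) is a square modulo $q$. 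By quadratic reciprocity, since $m$ is odd, this condition on $q$ depends only on $q\bmod 4m$, hence only on $q\bmod n$ as $4m\mid n$. This proves (i): the partition $I_m=I_m^+\sqcup I_m^-$ exists, so $m$ is a discriminating modulus.

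For (ii) I would work at level $n$ and use the character $\psi\colon I_n\to\{\pm1\}$, $\psi(q)=\left(\tfrac{-m}{q}\right)$ (suitably interpreted via reciprocity so that it is well-defined on residues mod $n$). By the computation in (i), $I_n^+=\ker\psi$ and $I_n^-=\psi^{-1}(-1)$, so $I_n^+$ is a subgroup and $I_n^-$ is the nontrivial coset; what remains is to show $\psi$ is surjective (so $[I_n:I_n^+]=2$) and that $I_n^-=\{-i:i\in I_n^+\}$, i.e. $\psi(-i)=-\psi(i)$ for all $i\in I_n$, equivalently $\psi(-1)=-1$. Now $\psi(-1)=\left(\tfrac{-m}{-1}\right)$ — concretely, replacing $q$ by $-q$ (mod $n$) flips $q\bmod4$ between $1$ and $3$ while, by reciprocity with $m$ odd and positive, it leaves $\left(\tfrac{m}{q}\right)$ essentially controlled so that the product flips sign; this is exactly the assertion that type $\tD_{2n}$ has no Ennola self-duality, reflected here in $\psi(-1)=-1$. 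Granting $\psi(-1)=-1$, surjectivity of $\psi$ is automatic (it already takes the value $-1$), so $I_n^+$ has index $2$ and $I_n^-=-I_n^+$, giving (ii).

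The main obstacle I anticipate is bookkeeping the quadratic-reciprocity sign carefully enough to see that $\psi$ is genuinely a well-defined homomorphism on $\Z_n^\times$ (not just on prime powers $q$) and that $\psi(-1)=-1$ holds unconditionally given only that $m$ is a positive odd integer and $4\mid n$. Concretely one writes $\left(\tfrac{-m}{q}\right)=\left(\tfrac{-1}{q}\right)\left(\tfrac{m}{q}\right)$; the first factor is $(-1)^{(q-1)/2}$, visibly a character of $q\bmod 4$ sending $-1\mapsto-1$; the second, by reciprocity applied to the odd part of $m$ (and handling repeated prime factors, which contribute trivially since the symbol is $\pm1$ and squares drop out — here it matters that we only care about the square class of $m$, so we may take $m$ squarefree), becomes a character of $q\bmod m$ that is \emph{unchanged} under $q\mapsto-q$ because $m\equiv$ odd and the reciprocity flip $(-1)^{\frac{q-1}{2}\frac{m-1}{2}}$ is absorbed. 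Hence the total sign flips under $q\mapsto-q$, which is $\psi(-1)=-1$. Once this sign computation is pinned down the rest is formal; I would present the reciprocity step as a short lemma-style paragraph rather than a case analysis, to keep the argument clean, and I would double-check the boundary cases $d\equiv 2\bmod 4$ with $q\equiv 3\bmod 4$ against the stated discriminant of $\GO_d^+(q)$ to make sure the identification $I_n^+=\ker\psi$ (and not its complement) is the correct one.
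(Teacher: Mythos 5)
Your proposal follows essentially the same route as the paper: reduce the Gram matrix modulo $p$ (legitimate since $p\nmid m$), compare the square class of $m$ with the discriminant of the standard plus-type form (which for $d\equiv 2\bmod 4$ is a square exactly when $q\equiv 1\bmod 4$), reduce everything to whether $-m$ is a square in $\F_q$, and control that condition by quadratic reciprocity. Your part (ii) is correct and is the paper's computation repackaged: the paper writes $m=m_1\cdots m_r$ and tracks the sign $(-1)^{(m+1)(p-1)/4}$ explicitly, whereas you observe that $\psi(q)=\left(\frac{-m}{q}\right)$ is an odd character because $m>0$ makes the $m$-part even while the mod-$4$ factor is odd; these are the same fact. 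Two small points you should make explicit: the step ``$\psi(-i)=-\psi(i)$ for $i\in I_n$'' uses $-1\in I_n$, which holds because $\chi$ is real valued (Lemma \ref{lem:charselfudal}); and the prime-power issue you defer is indeed harmless by multiplicativity, since $-m$ is a square in $\F_{p^r}$ if and only if $\psi(p)^r=\psi(q)=1$, which is how one sees the criterion really is a function of $q\bmod 4m$ and not just of $p$.

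The genuine gap is in your part (i). You establish that the embedding type depends only on $q\bmod 4m$ and then conclude ``the partition $I_m=I_m^+\cup I_m^-$ exists, so $m$ is a discriminating modulus.'' By the paper's definition a discriminating modulus $m$ requires dependence only on $q\bmod m$, and $4m$-periodicity does not give that; the inference is a non sequitur, and for $m\equiv 1\bmod 4$ it is actually false that the criterion is $m$-periodic (take $m=5$, as for the degree-$22$ character of $HS$: $-5$ is a square modulo $3$ but not modulo $13$, yet $3\equiv 13\bmod 5$). The paper handles exactly this point by a case split that your write-up omits: when $m\equiv 3\bmod 4$ the reciprocity sign $(-1)^{(m+1)(p-1)/4}$ is trivial, so $\left(\frac{-m}{p}\right)=\prod_i\left(\frac{p}{m_i}\right)$ depends only on $p\bmod m$ (in your language, $-m\equiv 1\bmod 4$, so the Kronecker character attached to $-m$ has conductor dividing $m$), while for $m\equiv 1\bmod 4$ one only gets modulus $4m$. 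So you need to add the $m\equiv 3\bmod 4$ analysis to justify any statement about the modulus $m$ itself; what your argument as written actually delivers is that $4m$, and hence $n$, is a discriminating modulus, which is also all that your part (ii) uses.
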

\begin{proof} Let $q\in \mPb$ be a power of a prime $p$. In order to determine the discriminant of the form for $\bar H$, which is the reduction modulo $p$ of $m$, it suffices to check whether $m$ has a square root in $\F_q$. If $q$ is an even power of $p$, this is always the case. If $q$ is an odd power of $p$, then this is equivalent to whether $m$ has a square root in $\F_p$, which is given by the Legendre symbol $(m/p)$. Note we will also need to take into account the discriminant for the standard form for $\GO_d^\ep(q)$, so we need the congruence modulo $4$ as well, which is the Legendre symbol $(-1/p)$. Thus $\bar H$ is conjugate to a subgroup of $\GO_d^+(q)$ if and only if $q\bmod m$ lies in $I_m$, and either $q$ is a square or $(-m/p)=1$.

We consider the second condition. Writing $m=m_1\ldots m_r$ with each $m_i$ prime, by quadratic reciprocity,
\begin{equation} \left(\frac{-m}{p}\right)=(-1)^{(p-1)/2}\prod_{i\in I} \left(\frac{m_i}{p}\right)=(-1)^{(p-1)/2}(-1)^{(m-1)(p-1)/4}\prod_{i\in I} \left(\frac{p}{m_i}\right)=(-1)^{(m+1)(p-1)/4}\prod_{i\in I} \left(\frac{p}{m_i}\right).\end{equation}
If $m\equiv 3\bmod 4$ then this reduces to $\prod (p/m_i)$, and whether this is $1$ depends only on the congruence of $p$ modulo $\prod m_i=m$. Furthermore, this set of congruences is a subgroup of index $2$ in $\Z/m\Z^\times$, and contains all squares in $\Z/m\Z^\times$ (so $q\bmod m$ lies in it whenever $q$ is a square). Also, $p\bmod m$ lies in this set if and only if $p^i$ lies in it for any odd $i$. Since $n$ is a multiple of $m$, the same holds modulo $n$ as well. Thus $I_n^+$ is well defined, and has index $2$ in $I_n$.

Finally, if $p\equiv -1\bmod n$ then each $(1/m_i)$ is $1$ if $m_i\equiv 1\bmod 4$, and $-1$ if $m_i\equiv 3\bmod 4$. Thus $-1$ does not lie in $I_n^+$ since $m\equiv 3\bmod 4$, and the result holds.

\medskip

Thus we assume that $m\equiv 1\bmod 4$, so the formula above reduces to $(-1)^{(p-1)/2}\prod_i (p/m_i)$. Now whether this is $1$ depends only on the congruence of $p$ modulo $4m$. Similarly, this set is a subgroup of index $2$ in $\Z/4m\Z^\times$, contains all squares, and $p\bmod 4m$ lies in the set if and only if $p^i\bmod m$ lies in it for any odd $i$. Again, if $p\equiv -1\bmod 4m$ then $p\equiv 3\bmod 4$, so the sign at the front is $-1$, and the product evaluates to $-1$. Thus $-1\not\in I_n^+$. This completes the proof.
\end{proof}

If $4\mid d$ then the same proof works, but we do not obtain that $-1$ lies in $I_n^-$, and so $I_n^+$ has index at most $2$ in $I_n$. (An example where $I_n^+=I_n$ is $\Omega_8^+(2)$, which lies in $\Omega_8^+(p)$ for all primes, and not in $\Omega_8^-(p)$.) The formula for whether $i\in I_n$ lies in $I_n^+$ now becomes
\begin{equation} \left(\frac{m}{p}\right)=\prod_{i\in I} \left(\frac{m_i}{p}\right)=(-1)^{(m-1)(p-1)/4}\prod_{i\in I} \left(\frac{p}{m_i}\right).\end{equation}

\medskip

We see in the proof of Proposition \ref{prop:typeDinteger} that the important thing was that the square root of the discriminant lies in $\F_q$ if and only if $q$ lies in some set modulo some integer. This motivates the following definition.

\begin{defn}\label{defn:squareroot} Given $H$ and $\rho$ a representation affording $\chi$, let $\alpha$ denote the determinant of a Gram matrix of the form stabilized by $\rho$. We say that $\rho$ is \emph{root cyclotomic} if the square root of $\alpha$ lies in some cyclotomic field, i.e., $\Q(\sqrt\alpha)/\Q$ is an abelian extension.
\end{defn}

If $\rho$ is root cyclotomic then we can prove a version of Ennola duality for $H$ even when $\rho$ is not over the integers. The proof uses class field theory, which appears not to be avoidable at this point.

First, we need to extend results from class field theory about splitting of polynomials from fields $\F_p$ to fields $\F_q$, which of course is of interest to us here. This does not appear to be a standard part of class field theory, so we have to do it ourselves here.

\begin{prop}\label{prop:allfinitefields} Let $\alpha$ be an element of a cyclotomic field. Let $f$ be its minimal polynomial over $\Z$, and $m$ be minimal such that $\alpha$ lies in the $m$th cyclotomic field. Suppose that the leading coefficient of $f$ is only divisible by primes dividing $m$. Suppose that $p$ and $p'$ are primes not dividing $m$, and let $q$ and $q'$ be powers of $p$ and $p'$ respectively.
\begin{enumerate}
\item If $q\equiv q'\bmod m$, then the polynomial $f$ splits into linear factors over $\F_q$ if and only if it splits into linear factors over $\F_{q'}$.
\item The polynomial $f$ splits into linear factors over $\F_q$ if and only if $f$ has a root in $\F_q$.
\end{enumerate}
\end{prop}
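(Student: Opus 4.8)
The plan is to reduce both parts to statements about the splitting field of $f$ and the behaviour of Frobenius elements in its Galois group. Let $L$ be the splitting field of $f$ over $\Q$; since $\alpha$ lies in the $m$th cyclotomic field $\Q(\zeta_m)$ and $f$ is its minimal polynomial, all roots of $f$ are Galois conjugates of $\alpha$, hence lie in $\Q(\zeta_m)$, so $L\subseteq\Q(\zeta_m)$ and $L/\Q$ is abelian with $\mathrm{Gal}(L/\Q)$ a quotient of $\Z_m^\times$. The hypothesis on the leading coefficient of $f$ guarantees that for any prime $p\nmid m$ the polynomial $f$ is, after clearing denominators, a polynomial whose reduction mod $p$ has the same degree and is separable (its discriminant divides a power of the discriminant of $L$ times the leading coefficient, both supported on primes dividing $m$); so for $p\nmid m$ the ring $\Z[1/m][\alpha]$ is unramified at $p$ and the factorization of $f$ mod $p$ mirrors the splitting of $p$ in $L$. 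This is the standard dictionary I would invoke (Dedekind's theorem on factorization of primes), taking care only that we have localized away from $m$ so that the index issues disappear.

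For part (i): I would first handle the case where $q=p$ and $q'=p'$ are the primes themselves. Then $f$ splits completely over $\F_p$ iff $p$ splits completely in $L$ iff the Frobenius $\mathrm{Frob}_p\in\mathrm{Gal}(L/\Q)$ is trivial. Since $L\subseteq\Q(\zeta_m)$, the image of $\mathrm{Frob}_p$ in $\mathrm{Gal}(L/\Q)$ is determined by the class of $p$ modulo $m$ via the Artin map $\Z_m^\times\twoheadrightarrow\mathrm{Gal}(L/\Q)$; hence $p\equiv p'\bmod m$ forces the two Frobenius elements to coincide, and $f$ splits over $\F_p$ iff it does over $\F_{p'}$. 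For general prime powers $q=p^a$, $q'=p'^b$, note that $f$ splits over $\F_{p^a}$ iff every irreducible factor of $f\bmod p$ has degree dividing $a$, i.e.\ iff $\mathrm{Frob}_p^a=1$ in $\mathrm{Gal}(L/\Q)$, which (again through the Artin map) says exactly that $p^a\equiv(\text{the image of }q)$ maps to the identity — equivalently that the class of $q$ in the quotient group $\mathrm{Gal}(L/\Q)$ of $\Z_m^\times$ is trivial. Since $q\equiv q'\bmod m$, their images agree, so $f$ splits over $\F_q$ iff over $\F_{q'}$. The key point, which I'd state carefully, is that ``$f$ splits over $\F_q$'' is a condition on the class of $q$ in $\Z_m^\times$ alone, precisely because $L$ is abelian over $\Q$.

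For part (ii): this is the assertion that an abelian extension has the property that a prime splits completely as soon as it has a degree-one factor. Concretely, $f$ has a root in $\F_q$ iff some irreducible factor of $f\bmod p$ has degree dividing $a$ (where $q=p^a$), i.e.\ iff the decomposition group of some prime of $L$ above $p$, after raising $\mathrm{Frob}_p$ to the $a$th power, fixes that prime's residue field embedding — but in an abelian extension all decomposition groups at $p$ coincide (they are all equal to the cyclic group generated by the single well-defined element $\mathrm{Frob}_p$), so if one prime above $p$ has its $a$th-power Frobenius trivial then all do, which means $f$ splits completely over $\F_{p^a}$. I would phrase this as: in $\mathrm{Gal}(L/\Q)$ abelian, $\mathrm{Frob}_p$ is a single element $\sigma$, and the degrees of the irreducible factors of $f\bmod p$ are all equal to the order of $\sigma$; hence ``some factor has degree dividing $a$'' is equivalent to ``all factors have degree dividing $a$,'' which is part (ii). The main obstacle is not conceptual but bookkeeping: making sure the reduction-mod-$p$ dictionary is valid for $f$ itself (not a monic integral model) under the stated leading-coefficient hypothesis, and checking that passing from primes to prime powers only changes ``trivial Frobenius'' to ``Frobenius of order dividing the exponent,'' both of which depend only on $q\bmod m$. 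Once the abelian-ness of $L/\Q$ is in hand, everything else is the classical Frobenius/Artin formalism.
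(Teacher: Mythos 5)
Your proposal is correct and takes essentially the same route as the paper: both arguments come down to the fact that $L=\Q(\alpha)\subseteq\Q(\zeta_m)$ is abelian over $\Q$, so that (for $q=p^a$) the splitting of $f$ over $\F_q$ is governed by whether the Frobenius at $p$ raised to the $a$th power is trivial in $\Gal(L/\Q)$, a condition depending only on $q\bmod m$, with (ii) following because in a Galois extension all irreducible factors of $f\bmod p$ have the same degree (equivalently, the decomposition/Galois group acts semi-regularly on the roots). The only differences are in packaging — you invoke Dedekind's factorization theorem and the Artin map, whereas the paper manipulates the roots directly inside a localization of the larger cyclotomic field $\Q(\zeta_{\lcm(q-1,m)})$ — and your separability-mod-$p$ step is asserted at the same level of detail as the paper's own (both quietly assume $p$ does not divide the relevant index of $\Z[\alpha]$), so no new gap is introduced.
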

\begin{proof} Let $m'=\lcm(q-1,m)$, and $L$ be the $m'$th cyclotomic field. Let $K=\Q(\alpha)$. Let $\mathfrak p$ be some prime ideal of $L$ above $p$, let $\mathcal O_p$ be the local ring and $k$ the residue field. Write 
\[ f(x)=a\prod_{i=1}^n (x-x_i),\]
where $x_i\in \mathcal O_p\cap K$ and $a$ lies in $\Z_{(p)}^\times$.

The polynomial $f$ splits over $\F_q$ if and only if the image of each of the $x_i$ in $k$ lies in $\F_q$, and this is true if and only if, for each $i$, the images of $x_i$ and $x_i^q$ in $k$ are the same. Letting $q=p^r$, let $\sigma$ denote the Frobenius at $p$, so that $f$ splits over $\F_q$ if and only if the images of $x_i$ and $\sigma^r(x_i)$ are the same in $k$.

Note that $\sigma^r(x_i)$ is some root of $f$, hence $\sigma^r(x_i)=x_j$ for some $j$. Since $f$ is separable modulo $p$ by assumption on $p$ (as the only ramified primes divide $m$), the images of the $x_i$ in $k$ are all distinct. Thus $f$ splits in $\F_q$ if and only if $\sigma^r$ fixes all of the $x_i$, so it lies in $\Gal(L/K)$.

But $\sigma^r$ lies in $\Gal(L/\Q)$, and is dependent only on its action on $\zeta_{m'}=\e^{2\pi\I/m'}$, and of course $\sigma^r(\zeta_{m'})=\zeta_{m'}^q$, which is in turn only dependent on the congruence of $q$ modulo $m'$. By the Chinese remainder theorem, this depends only on $q$ modulo $m$, i.e., whether $f$ splits in $\F_q$ depends only on the congruence of $q$ modulo $m$.

This completes the proof of (i).

\medskip

For the proof of (ii), note that, since $K/\Q$ is an abelian extension, the Galois group $\Gal(K/\Q)$ acts regularly on the roots of $f$. In particular, all non-trivial elements of $\Gal(K/\Q)$ act fixed-point freely on the roots of $f$. Since reduction modulo $p$ yields an embedding of the Galois group over $\F_p$ into the Galois group over $\Q$, this shows that the Galois group over $\F_p$ acts semi-regularly. In particular, if $f$ has a root over a particular finite field it splits over it.
\end{proof}

With this, we are able to prove Ennola duality for type $\tD_n$, swapping $\tD_n$ and ${}^2\tD_n$ for $n$ odd, and being a self-duality for $n$ even (as is the case for representations).

\begin{thm}\label{thm:EnnolatypeD} Given $H$ and $\chi$, assume that $\chi$ has indicator $+1$, and that $d$ is even. Let $\rho$ be a representation affording $\chi$, and suppose that $\rho$ is root cyclotomic.
\begin{enumerate}
\item There exist discriminating moduli for $H$ and $\chi$.
\item If $m$ is a discriminating modulus then $I_m^+$ is a subgroup of index at most $2$ in $I_m$.
\item If $4\mid d$ then $-1\in I_m^+$, and so $I_m^+$ is closed under taking negatives. If $d\equiv 2\bmod 4$ then $-1\in I_m^-$, and thus $I_m^-=\{-i:i \in I_m^+\}$.
\end{enumerate}
\end{thm}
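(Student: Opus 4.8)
The plan is to reduce Theorem~\ref{thm:EnnolatypeD} to the computation carried out in Proposition~\ref{prop:typeDinteger}, replacing the square-free integer $m$ there by the element $\alpha=\det M$ of a cyclotomic field, and using Proposition~\ref{prop:allfinitefields} to make sense of ``$\sqrt\alpha\in\F_q$'' as a congruence condition modulo a single integer. First I would fix a representation $\rho$ affording $\chi$, realised over the ring of integers of some number field; scaling $\rho$ by a unit changes $\alpha$ only by a square, so we may assume the minimal polynomial $f$ of $\alpha$ has leading coefficient divisible only by primes dividing $|H|$ (the argument given just before Proposition~\ref{prop:typeDinteger}, that a prime $r$ not meeting this condition would force the mod-$r$ reduction of $\chi$ to be reducible, applies verbatim). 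Let $\beta=\sqrt\alpha$; since $\rho$ is root cyclotomic, $\beta$ lies in some cyclotomic field, and I let $m_0$ be minimal such that $\beta\in\Q(\zeta_{m_0})$ and let $g$ be the minimal polynomial of $\beta$. I would check that $g$ also has the leading-coefficient property (its leading coefficient divides that of $f$ up to small factors), so that Proposition~\ref{prop:allfinitefields} applies to $g$.

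For part~(i): for $q$ prime to $|H|$ with $q\bmod n\in I_n$ (where $n$ is any distinguishing modulus; enlarge it to be a multiple of $4m_0$), the subgroup $\bar H$ lies in $\GL_d(q)$ and stabilises a unique bilinear form, whose discriminant in $\F_q/\F_q^2$ is the class of the reduction of $\alpha$. As in Proposition~\ref{prop:typeDinteger}, $\bar H\le\GO_d^+(q)$ iff this discriminant matches the standard one, which by the paragraph recalling the discriminants of $\GO_d^\pm(q)$ is: (a) for $4\mid d$, iff $\alpha$ is a square in $\F_q$; (b) for $d\equiv2\bmod4$, iff $\alpha$ is a square in $\F_q$ and $q\equiv1\bmod4$, or $\alpha$ is a non-square and $q\equiv3\bmod4$ --- i.e. iff $-\alpha$ (equivalently $\alpha$ times a fixed non-square representative of $-1$) is a square. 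In either case the relevant condition is ``a specified element $\gamma\in\{\alpha,-\alpha\}$ of a cyclotomic field is a square in $\F_q$'', i.e. its square root lies in $\F_q$, i.e. the minimal polynomial of $\sqrt\gamma$ has a root in $\F_q$; by Proposition~\ref{prop:allfinitefields}(ii) this is the same as splitting over $\F_q$, and by Proposition~\ref{prop:allfinitefields}(i) it depends only on $q$ modulo a fixed integer $m$ (a multiple of $m_0$ or $4m_0$, dividing $|H|$ after intersecting with the exponent by the Corollary following Lemma~\ref{lem:defmodcyclotomic}). Taking $I_m^+$ to be the residues in $I_m$ satisfying this condition, and $I_m^-$ its complement in $I_m$, exhibits $m$ as a discriminating modulus, and any multiple of it is again one.

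For part~(ii): the ``square in $\F_q$'' condition defines a subgroup of $\F_q^\times/(\F_q^\times)^2$ of order $\le2$, so the set of $q\bmod m$ for which it holds is either all of $(\Z/m)^\times$ or a subgroup of index $2$; intersecting with $I_m$ gives that $I_m^+\le I_m$ has index $\le1$ or $2$. Here one must be slightly careful that the condition genuinely factors through $q\bmod m$ and is multiplicative in the residue --- this is exactly what Proposition~\ref{prop:allfinitefields}(i) together with the Galois-theoretic description of splitting (the Frobenius $\sigma^r$ depending only on $\zeta_m\mapsto\zeta_m^q$) delivers, since the map $q\bmod m\mapsto[\sqrt\gamma\in\F_q]$ is induced by a homomorphism $(\Z/m)^\times\to\Gal(L/\Q)\to\{\pm1\}$. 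For part~(iii): put $q\equiv-1\bmod m$; then $q\equiv3\bmod4$. When $4\mid d$ the condition for $I_m^+$ is ``$\alpha$ a square in $\F_q$'', and since $q\equiv-1$ the Frobenius $\sigma^{\text{odd}}$ acting as complex conjugation fixes $\beta=\sqrt\alpha$ (it is real-valued: $\alpha>0$ by the positivity argument in the text, and $\beta\in\R$), so $\beta\in\F_q$ and $-1\in I_m^+$; hence $I_m^+$ is closed under negation. When $d\equiv2\bmod4$ the condition is ``$-\alpha$ a square in $\F_q$'', i.e. $\sqrt{-\alpha}\in\F_q$; but $\sqrt{-\alpha}=\sqrt{-1}\,\beta$ is \emph{not} real (as $\alpha>0$), so complex conjugation moves it, $\sqrt{-\alpha}\notin\F_q$ for $q\equiv-1$, and $-1\in I_m^-$; combined with (ii), $|I_m:I_m^+|=2$ and $I_m^-=\{-i:i\in I_m^+\}$.

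I expect the main obstacle to be the bookkeeping around the leading coefficient and the choice of $m$: one has to verify that passing from $\alpha$ to $\sqrt\alpha$ (and to $\sqrt{-\alpha}$) preserves the hypothesis of Proposition~\ref{prop:allfinitefields} on which primes divide the leading coefficient --- i.e. that no new ramification outside $|H|$ is introduced --- and that the resulting modulus can be taken to divide $|H|$, which needs the earlier observation that a prime $r\nmid|H|$ dividing the discriminant would make the mod-$r$ reduction of $\chi$ reducible. The Galois-theoretic core (that the ``square in $\F_q$'' test is a character of $(\Z/m)^\times$ with the stated value at $-1$) is then essentially a repackaging of the quadratic-reciprocity computation in Proposition~\ref{prop:typeDinteger}, now done intrinsically via $\Gal(\Q(\sqrt{\pm\alpha})/\Q)\hookrightarrow(\Z/m)^\times$ rather than by explicit Legendre symbols.
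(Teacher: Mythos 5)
Your proposal follows essentially the same route as the paper's proof: use the root-cyclotomic hypothesis to place $\sqrt\alpha$ in a cyclotomic field, feed its minimal polynomial into Proposition~\ref{prop:allfinitefields} so that ``$\alpha$ (resp.\ $-\alpha$) is a square in $\F_q$'' becomes a condition on $q$ modulo a fixed integer, build $I_m^\pm$ from the kernel of the resulting quadratic character, and locate $-1$ using that $\alpha$ is real and positive. Your packaging of the $d\equiv 2\bmod 4$ case as ``$-\alpha$ is a square in $\F_q$'' is just a reformulation of the paper's two index-$2$ subgroups $J_1$ (fixing $\pm\sqrt\alpha$) and $J_2$ (residues $\equiv 1\bmod 4$), with $I_n^+$ the other index-$2$ overgroup of $J_1\cap J_2$; the conclusions about $-1$ agree.

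The one genuine gap is at the very first step, where you ``fix a representation $\rho$ affording $\chi$, realised over the ring of integers of some number field.'' The root-cyclotomic hypothesis is a property of the Gram determinant of the \emph{given} realisation; if you pass to a realisation over an arbitrary (possibly non-abelian) number field, the new determinant differs from $\alpha$ by the square of an element of a compositum that need not be cyclotomic, so $\sqrt{\alpha'}$ may fail to lie in any cyclotomic field and Proposition~\ref{prop:allfinitefields} no longer applies; moreover the identification of the $\F_q$-form's discriminant class with ``the reduction of $\alpha$'' needs the reduction to be taken at a prime of an abelian field compatibly with the congruence on $q$. The paper closes exactly this hole by citing the Pasechnik/Guralnick--Navarro result that $\rho$ may be chosen with image in $\Q(\zeta_n)\cap\R$, i.e.\ over a real cyclotomic field, after which the rest of your argument (including the leading-coefficient bookkeeping, which is a separate and comparatively minor issue that the paper itself glosses over) goes through as you describe. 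Your remark that the discriminating modulus ``can be taken to divide $|H|$'' is not needed for the statement and is not justified by the Corollary you cite, which concerns distinguishing, not discriminating, moduli.
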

\begin{proof} By \cite{pasechnik2021un} (proved independently by Guralnick--Navarro), $\rho$ can be chosen with image in $\Q(\zeta_n)\cap \R$ for some integer $n$. Let $\alpha$ be the determinant for the Gram matrix for a bilinear form associated to $\rho$. Since we are assuming that $\rho$ is root cyclotomic, $\sqrt\alpha$ lies in some cyclotomic field, which we can assume is $\Q(\zeta_n)$ by increasing $n$ if necessary. We see that $n$ is a defining modulus for $H$ and $\chi$. Since $\alpha$ is only defined up to a square, we may assume that $\alpha$ is integral.

Let $f$ be the minimal polynomial for $\sqrt\alpha$, and apply Proposition \ref{prop:allfinitefields}. This means that, for $q$ a power of $p\in\mP$, whether $f$ splits over $\F_q$ depends only on $q\bmod n$. Thus $\alpha$ being a square in $\F_q$ depends only on $q\bmod n$. If $n$ is not already a multiple of $4$, and $d\equiv 2\bmod 4$, multiply $n$ by $4$ so as to account for the change in discriminant of the form of $\GO_d^+(q)$ according to $q\bmod 4$. Since $n$ is already a defining modulus for $H$ and $\chi$, it must now be a discriminating modulus as well.

Let $J_1$ denote the subset of $I_n$ that consists of all elements $i$ such that $\zeta_n\mapsto \zeta_n^i$ fixes all values of $\chi$ \emph{and} fixes $\pm\sqrt\alpha$. Thus $J_1$ is a subgroup of index $2$ in $I_n$, and $J_1$ contains $-1$. To see this, notice that $\alpha$ is real (since the matrix is symmetric), and it is positive, as we noted at the start of this section. Thus $\pm\sqrt\alpha$ is a pair of real numbers, which are left invariant under complex conjugation, i.e., the field automorphism on $\Q(\zeta_n)$ such that $\zeta_n\mapsto\zeta_n^{-1}$.

If $4\mid d$ then $J_1=I_n^+$ and the theorem is proved. Thus we assume that $d\equiv 2\bmod 4$, and so we need to work further. Let $J_2$ consist of those elements of $I_n$ congruent to $1$ modulo $4$, another subgroup of index $2$, and not containing $-1$, so $J_1\neq J_2$. Since the determinant of the form for $\GO^+_d(q)$ is a square for $q\equiv 1\bmod 4$ and a non-square for $q\equiv 3\bmod 4$, $I_n^+$ consists of the union of $J_1\cap J_2$ and $I_n\setminus(J_1\cup J_2)$. Since $J_1$ and $J_2$ both have index $2$, $I_n^+$ is also a subgroup of index $2$ (it is the other overgroup of $J_1\cap J_2$) and does not contain $-1$.

We have therefore proved the second part of the theorem.
\end{proof}

Unfortunately, not all $H$ and $\chi$ are root cyclotomic, but counterexamples appear rare in low dimension.

\begin{example} Let $H$ be the Janko sporadic group $\mathrm J_1$ and $\chi$ be one of the two irreducible characters of degree $56$. Then $\Q(\chi)=\Q(\sqrt 5)=K$. Choosing a representation with image in $\GL_{56}(K)$, we find that the determinant $\alpha$ is difficult to write down except with a computer, but certainly the extension $\Q(\sqrt\alpha)/\Q$ is not Galois. To see this, note that if $\chi'$ is the other character of degree $56$, and letting $\alpha'$ correspond to $\chi'$, if $\alpha=a+b\sqrt 5$ then $\alpha'=a-b\sqrt 5$. If $\Q(\sqrt\alpha)/\Q$ were Galois then $\alpha$ would have a square root in $\F_p$ if and only if $\alpha'$ does, by Proposition \ref{prop:allfinitefields}. However, if $p=101$, then one of the $56$-dimensional representations of $H$ is conjugate to a subgroup of $\GO^+_{56}(101)$ and the other is conjugate to a subgroup of $\GO^-_{56}(101)$. The Galois group of $\Q(\sqrt\alpha,\sqrt{\alpha'})/\Q$ is dihedral of order $8$, so not abelian either.

For some primes there are two classes of subgroups $\mathrm J_1$ in $\GO^+_{56}(p)$, for some there are two in $\GO^-_{56}(p)$, and for some there is one class in each.
\end{example}

Thus there is in general no congruence for embedding simple groups for type $\tD_n$ with $n$ even, in contrast to the tables for small dimensions in \cite{bhrd,rogersphd,schroederphd}. Indeed, dimension $56$ appears to be the smallest dimension for which there is a representation that is not root cyclotomic, so there is no modulus by which we can distinguish the discriminant of the bilinear form.

However, for $\tD_n$ for $n$ odd, all representations for $\PSL_2(r)$ for $r\leq 31$, and other simple groups of dimension $d$ up to $250$ with $d\equiv 2\bmod 4$, are root cyclotomic. This leads to the following conjecture.

\begin{conj}\label{conj:EnnolatypeD} If $d\equiv 2\bmod 4$ and $\chi$ has indicator $+1$, if $\rho$ affords $\chi$ then $\rho$ is root cyclotomic.
\end{conj}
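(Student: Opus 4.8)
\medskip

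\noindent\textbf{Towards a proof.} The plan is to translate the conjecture into a statement about a single square class, dispose of the easy local constraints on it, and then confront the hard arithmetic core. First, by \cite{pasechnik2021un} (independently Guralnick--Navarro) we may realise $\rho$ over $K=\Q(\zeta_n)\cap\R$ for a suitable $n$, and after enlarging $n$ we may assume $\rho$ is absolutely irreducible over $K$ (the remaining case, where $\mathrm{End}_{KH}(V)$ is a nontrivial field, is addressed at the end). Then the invariant symmetric form has a Gram matrix $M\in\GL_d(K)$, unique up to a scalar in $K^\times$ and a basis change in $\GL_d(K)$; since $d$ is even, both ambiguities alter $\det M$ only by a square, so the class $\bar\alpha$ of $\alpha:=\det M$ in $K^\times/(K^\times)^2$ --- the discriminant of the invariant quadratic form $Q$ --- is an invariant of $(H,\chi)$ alone, and there is genuinely nothing to choose in the conjecture. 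Now $\sqrt\alpha$ lies in a cyclotomic field precisely when $K(\sqrt\alpha)/\Q$ is abelian, and this certainly holds if $\bar\alpha$ has a representative in $\Q^\times$, for then $K(\sqrt\alpha)=K\cdot\Q(\sqrt r)$ with $r\in\Q^\times$ is a compositum of abelian extensions of $\Q$. So it would suffice to prove that, for $d\equiv 2\bmod 4$, the square class of $\det M$ is represented by a rational number (and in any case $\bar\alpha$ must at least be $\Gal(K/\Q)$-invariant, by Proposition \ref{prop:allfinitefields}). The $\mathrm J_1$ example shows both statements fail when $4\mid d$, so the hypothesis $d\equiv 2\bmod 4$ must enter essentially.

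Two constraints on $\bar\alpha$ come for free and will be needed. At each real place $v$ of $K$, absolute irreducibility forces the invariant form on $V\otimes_v\R$ to be definite (the argument recalled before Proposition \ref{prop:typeDinteger}), so $\alpha$ is totally positive. At a prime $\mathfrak p$ of $K$ above a rational prime $\ell\nmid|H|$, the reduction of $V$ modulo $\mathfrak p$ is an irreducible module over an algebraically closed field of characteristic $\ell$ carrying a nonzero reduced invariant form whose radical, being a submodule, is zero; hence after rescaling the reduced form is nondegenerate and $v_{\mathfrak p}(\alpha)$ is even. Therefore, modulo squares, $\alpha$ is a unit away from the primes of $K$ dividing $|H|$. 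This bounds the ramification of $K(\sqrt\alpha)$ but does not force a rational representative, which is the content that remains.

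To pin down the square class of $\det M$ at the primes dividing $|H|$ I see two promising lines. Line (A) is to work locally and reassemble: for each $\ell\mid|H|$, analyse an $\mathcal O_{K,\mathfrak p}H$-invariant quadratic lattice affording $V$ --- via a maximal order of $K_{\mathfrak p}H$, or a Jordan splitting of the lattice --- to compute $v_{\mathfrak p}(\alpha)$ and the local unit class of $\alpha$ in terms of the $\ell$-modular decomposition of $\chi$, and then show, using the Hasse--Minkowski classification of quadratic forms over $K$ together with the fact that all these local pieces come from one global $H$-module, that the resulting family of Hilbert symbols is consistent with $\bar\alpha$ being rational. The parity quantities $d(d-1)/2$ and $(-1)^{d/2}$ that govern both the Hasse-invariant bookkeeping and the standard discriminants of $\GO^+_d$ and $\GO^-_d$ are where $d\equiv 2\bmod 4$ should become decisive --- these are precisely the terms that permit the $\mathrm J_1$-type conspiracy when $4\mid d$. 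Line (B) is to interpret $\sqrt\alpha$ through Gauss sums: in the known cases with $\Q(\chi)\neq\Q$, such as the half-discrete-series representations of $\PSL_2(r)$ that are built from the Weil representation, the discriminant of the invariant form is governed by quadratic Gauss sums, which lie in cyclotomic fields; one would look for a general formula expressing $\sqrt\alpha$ as a product of such Gauss sums and a Stiefel--Whitney-type correction, in the spirit of the Fr\"ohlich--Serre--Deligne formulas for trace forms, with the correction term vanishing exactly when $d\not\equiv 0\bmod 4$.

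The main obstacle is this last step in its entirety: controlling $\det M$ at the primes dividing $|H|$, where the modular representation theory is wild, and then proving global abelianness, is the genuinely open heart of the conjecture, and the $\mathrm J_1$ computation shows that the local square classes really can combine non-abelianly, so there is no shortcut around using $d\equiv 2\bmod 4$. A realistic intermediate goal is the case $[\Q(\chi):\Q]\le 2$: when $\Q(\chi)=\Q$ this is Proposition \ref{prop:typeDinteger}, and when $[\Q(\chi):\Q]=2$ root cyclotomicity holds if and only if $\bar\alpha$ is invariant under the nontrivial automorphism $\gamma$ (so that $\Gal(K(\sqrt\alpha)/\Q)$ has order $4$ and is automatically abelian), which, since a rational prime dividing $|H|$ that is inert or ramified in $\Q(\chi)$ contributes an automatically $\gamma$-invariant class, reduces to the behaviour at the split primes --- this is closely tied to whether $N_{\Q(\chi)/\Q}(\det M)$, the discriminant of the restriction-of-scalars form on the underlying $\Q H$-module of $V$, is a square up to controllable $2$-adic and unit factors, and this seems the natural first case to settle. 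Finally, in the absolutely reducible case one first passes to a splitting field $K'\supseteq K$, applies the above there, and descends by a norm argument, again using that $d$ is even to render the relevant scalars harmless.
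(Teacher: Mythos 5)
The statement you are addressing is a \emph{conjecture} in the paper: the author offers no proof, only computational evidence (root cyclotomicity verified for the relevant representations of $\PSL_2(r)$, $r\leq 31$, and of other simple groups in dimensions $d\leq 250$ with $d\equiv 2\bmod 4$) together with the remark that it is equivalent to Ennola duality for twice-odd-dimensional orthogonal groups. Your submission is, by your own account, not a proof either: the preliminary reductions you make (well-definedness of the discriminant class in $K^\times/(K^\times)^2$ using that $d$ is even, total positivity from irreducibility at real places, evenness of $v_{\mathfrak p}(\alpha)$ at primes away from $|H|$) are sound and agree with the discussion the paper gives before Proposition \ref{prop:typeDinteger}, but the step you correctly identify as the heart of the matter --- controlling the square class at primes dividing $|H|$ and deducing abelianness of $\Q(\sqrt\alpha)/\Q$ --- is left entirely open, in both your lines (A) and (B). So the genuine gap is the conjecture itself; nothing in your text closes it, and there is no proof in the paper to compare against.

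Two specific cautions on the reductions you do make. First, your proposed target, that $\bar\alpha$ is represented by a \emph{rational} number, is strictly stronger than root cyclotomicity: an element such as $2+\sqrt 2$ in $\Q(\sqrt 2)$ is not rational modulo squares, yet its square root generates the real subfield of $\Q(\zeta_{16})$, an abelian extension. So even if the conjecture is true, the rationality statement could fail, and aiming at it may be aiming past the truth; the correct target is only that $\bar\alpha$ is $\Gal(K/\Q)$-stable and that the resulting Galois extension $K(\sqrt\alpha)/\Q$ is abelian (for your $[\Q(\chi):\Q]=2$ intermediate case this distinction matters only if the field of realization is strictly larger than $\Q(\chi)$, which your sketch glosses over). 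Second, the ``absolutely reducible case'' you defer to the end is vacuous: $\chi$ is an irreducible ordinary character and any representation over a number field affording $\chi$ is automatically absolutely irreducible, so no descent-by-norms step is needed --- but neither does its removal bring you closer to the open core.
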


This conjecture holds if and only if Ennola duality holds for twice-odd dimensional orthogonal groups, which is expected.

For all type $\tD$ groups, even over $\Z$ so they are root cyclotomic, the modulus involved cannot obviously be read off from the character table, so it becomes another invariant of real representations that needs to be calculated. For example, for $HS$ and $McL$ in dimension $22$, the discriminants are $5$ and $15$ respectively.\footnote{One might be tempted to look at higher Frobenius--Schur indicators: for $HS$ and $McL$ they are the same for $r=3,5,11$, and are $0$, $1$ and $4$ respectively. The two groups differ for $r=7$, with values $3$ and $4$.}

\section{Ennola duality for exceptional groups}
\label{sec:exceptional}

Before we start this section, we underscore here that we will only be considering subgroups $H$ that occur in algebraic groups in characteristic $p$, where $p\nmid |H|$. In particular, the subgroup $H$ must arise in the corresponding algebraic group in characteristic $0$.

For exceptional groups, it is first not clear that there is such an analogue of the defining modulus. We will have to use the case-by-case lists of maximal subgroups of the finite exceptional groups to proceed. We also must consider what the analogue of an irreducible representation is, which we used when deciding which subgroups to embed into $\GL_d$. This is the reason we have to move away from the subset $\mP$, because there is no \emph{a priori} definition of it. (Of course, we can examine the tables of subgroups $H$ to produce the correct set $\mP$ for each subgroup $H$, but this \emph{ad hoc} treatment, for now, has little theoretical underpinning.)

We will only consider `Lie primitive' subgroups. Recall that a subgroup of a reductive algebraic group is \emph{irreducible} if it does not lie in a proper parabolic subgroup, and is \emph{Lie primitive} if it does not lie in a proper, positive-dimensional subgroup. Understanding Lie primitive subgroups, plus induction on the dimension of the reductive group, generally allows us to understand all irreducible subgroups. For our purposes of understanding maximal subgroups of finite groups, we restrict our attention to Lie primitive subgroups. (Note that we need not consider subgroups of the `same type', i.e., subgroups that are the fixed points of Steinberg endomorphisms on the same algebraic group -- e.g., subfield and twisted subfield subgroups -- because they do not arise from subgroups of the group over $\C$.)

If the ambient algebraic group $\mbG$ is of type $G_2$ or $F_4$ then everything works, defining moduli exist, and the set $I_n$ is closed under taking negatives, just as with types B and C. See Table \ref{tab:g2} for Lie primitive subgroups of $G_2(q)$, taken from \cite[Table 8.41]{bhrd}, and Table \ref{tab:f4} for Lie primitive subgroups of $F_4(q)$, taken from \cite{craven2020un}.

\begin{table}\begin{center}
\begin{tabular}{cc}
\hline Subgroup & $q$
\\\hline $2^3\cdot \PSL_3(2)$ & All $q$
\\ $\PSL_2(13)$ & $q\equiv \pm1,\pm3,\pm4\bmod 13$
\\ $\PSL_2(8)$ & $q\equiv \pm 1\bmod 9$
\\ $\PSU_3(3).2$ & All $q$
\\ \hline
\end{tabular}\caption{Lie primitive subgroups $H$ of $G_2(q)$, $\gcd(q,|H|)=1$.}\label{tab:g2}\end{center}
\end{table}

\begin{table}\begin{center}
\begin{tabular}{cc}
\hline Subgroup & $q$
\\\hline $3^3\cdot\SL_3(3)$ & All $q$
\\$\PSL_2(8)$ & $q\equiv \pm 1\bmod 7$
\\ $\PGL_2(13)$ & $q\equiv \pm 1\bmod 7$
\\ $\PSL_2(17)$ & $q\equiv \pm1,\pm2,\pm4,\pm8\bmod 17$
\\ $\PSL_2(25).2$ & All $q$
\\ $\PSL_2(27)$ & $q\equiv \pm1\bmod 7$
\\ \hline
\end{tabular}\caption{Lie primitive subgroups $H$ of $F_4(q)$, $\gcd(q,|H|)=1$.}\label{tab:f4}\end{center}
\end{table}

For $E_6(q)$ and ${}^2\!E_6(q)$, if one takes the simple group, or the triple cover, then there is no longer a defining modulus for the subgroup $\PSL_2(8).3$. This is a subgroup of the algebraic group, but whether it lies in the simple group depends not only on the congruence modulo $7$ ($\PSL_2(8)$ embeds in $E_6(q)$ if $q\equiv 1,2,4\bmod 7$), but on the presence of a cube root of $\sqrt{-7}+1$ in $\F_q$. Formally (see \cite[Theorem 6.8]{craven2020un}, the group $\PSL_2(8).3$ embeds in the finite simple group $E_6(q)$ if and only if $q\equiv 2\bmod3$, or $q$ is a cube, or $\sqrt{-7}+1$ has a cube root in $\F_q$. This latter condition cannot be expressed as a simple congruence modulo some number (to see this notice that the root does not lie in a cyclotomic field), so for simply connected groups, and almost simple subgroups, one has no defining modulus in general.

If, however, one uses adjoint versions of $E_6(q)$, so $E_6(q)$ with any diagonal automorphisms added on, then there are defining moduli, and the Lie primitive subgroups for $E_6(q)$ and ${}^2\!E_6(q)$ can be given in one table. The Lie primitive subgroups of ${}^\ep\!E_6(q)$ are in Table \ref{tab:epe6}: notice that we can switch between the two groups simply by replacing $q$ by $-q$.

\begin{table}\begin{center}
\begin{tabular}{cc}
\hline Subgroup & $q$
\\\hline $3^{3+3}\cdot \SL_3(3)$ & $3\mid(q-\ep1)$
\\ $\PSL_2(8).3$ & $\ep q\equiv 1,2,4\bmod 7$
\\ $\PSL_2(11)$ & $q\equiv \pm 1\bmod 5$, $\ep q\equiv 1,3,4,5,9\bmod 11$
\\ $\PSL_2(13)$ & $\ep q\equiv 3,5,6\bmod 7$, $q\equiv \pm2,\pm5,\pm6\bmod 13$
\\ $\PSL_2(19)$ & $q\equiv \pm1\bmod 5$, $\ep q\equiv 1,4,5,6,7,9,11,16,17\bmod 19$
\\ $\PGL_2(13)$ (nov) & $\ep q\equiv 1,2,4\bmod 7$, $q\equiv \pm1,\pm3,\pm4\bmod 13$
\\ \hline
\end{tabular}\caption{Lie primitive subgroups $H$ of the adjoint group of type ${}^\ep\!E_6(q)$ for $\ep=\pm$, $\gcd(q,|H|)=1$. The novelty Lie primitive subgroup $\PGL_2(13)$ occurs for an almost simple group inducing a graph automorphism on ${}^\ep\!E_6(q)$ (its derived subgroup is contained in $G_2$).}\label{tab:epe6}\end{center}
\end{table}

We also include a table of the known Lie primitive subgroups of $E_7(q)$, a list from \cite{craven2021un} that is expected to be complete. It displays the same behaviour, that replacing $q$ by $-q$ leaves the table invariant. We have again used the adjoint version: $\PSU_3(8).6$ and $\PGL_2(19)$ lie in the simple group (if they are in the adjoint group) if and only if $q\equiv \pm 1\bmod 8$. Note that all self-dual representations for $\PSU_3(3)$ are definable over $\Z$, so the defining modulus is $1$. However, for embedding in $E_7(q)$, we need $q\equiv\pm1 \bmod 8$. This examples shows that we do need to be careful when considering defining moduli for exceptional groups, even those that are Ennola self-dual.

\begin{table}\begin{center}
\begin{tabular}{cc}
\hline Subgroup & $q$
\\\hline $\PSU_3(3)$ & $q\equiv \pm 1\bmod 8$
\\$\PSU_3(8).6$ & All $q$
%\\$\PSU_3(8).3_1$ & $q\equiv \pm 3\bmod 8$
\\$\PSL_2(37)$ & $q\equiv \pm1,\pm3,\pm4,\pm7,\pm9,\pm10,\pm11,\pm12,\pm16\bmod 37$
\\$\PSL_2(29)$ & $q\equiv \pm1\bmod 5$, $q\equiv \pm1,\pm4,\pm5,\pm6,\pm7,\pm9,\pm13\bmod 29$
\\ $\PSL_2(27)$ & $q\equiv \pm 1\bmod 13$
\\ $\PSL_2(19)$ & $q\equiv\pm1\bmod 5$, $q_{19}\equiv q_3\bmod 2$
\\ $\PGL_2(19)$ & $q\equiv \pm1\bmod 5$
\\ \hline
\end{tabular}\caption{Known Lie primitive subgroups $H$ of the adjoint group of type $E_7(q)$, $\gcd(q,|H|)=1$. Here, $q_{19}$ is the order of $q$ modulo $19$ and $q_3$ is the order of $q$ modulo $3$.}\label{tab:e7}\end{center}
\end{table}

\end{document}